\theoremstyle{plain}
\newtheorem{thm}{Theorem}[section]
\newtheorem{prop}[thm]{Proposition}
\newtheorem{cor}[thm]{Corollary}
\newtheorem{lem}[thm]{Lemma}
\theoremstyle{definition}
\newtheorem{exa}[thm]{Example}
\newtheorem{rem}[thm]{Remark}
\newtheorem{defn}[thm]{Definition}
\def\det{\mathop{\mathrm{det}}\nolimits}
\def\Im{\mathop{\mathrm{Im}}\nolimits}
\def\Ker{\mathop{\mathrm{Ker}}\nolimits}
\def\Coker{\mathop{\mathrm{Coker}}\nolimits}
\def\Hom{\mathop{\mathrm{Hom}}\nolimits}
\def\F{\mathop{\mathbb{F}}\nolimits}
\newcommand{\tri}{{ \lhd}}
\newcommand{\col}{{\rm Col}}
\newcommand{\lra}{\longrightarrow}
\newcommand{\ra}{\rightarrow}
\newcommand{\Z}{{\Bbb Z}}
\newcommand{\id}{{\mathrm{id}}}
\newcommand{\pc}[2]{\mbox{$\begin{array}{c}
\includegraphics[scale=#2]{#1.eps}
\end{array}$}}
\begin{document}
\large
\begin{center}
{\bf\Large Twisted cohomology pairings of knots I; diagrammatic computation}
\end{center}
\vskip 1.5pc

\begin{center}{\Large Takefumi Nosaka}\end{center}\vskip 1pc\begin{abstract}\baselineskip=12pt \noindent
We provide a diagrammatic computation for the bilinear form,
which is defined as the pairing between
the (relative) cup products with every local coefficients and every integral homology 2-class of every links in the 3-sphere.
As a corollary, we construct bilinear forms on the twisted Alexander modules of links.
\end{abstract}

\begin{center}
\normalsize
\baselineskip=11pt
{\bf Keywords} \\
\ \ \ Cup product, Bilinear form, knot, twisted Alexander polynomial,
group homology, quandle \ \ \
\end{center}

\baselineskip=13pt

\tableofcontents

\large
\baselineskip=16pt

\section{Introduction}
The cup products and pairings of connected compact $C^{\infty}$-manifolds $Y$ have a long history, and possess powerful information in topology.
As is classically known from algebraic surgery theory, if $Y$ is simply connected and closed with $ \mathrm{dim}(Y)\geq 6$, then
the homeotype of $Y$ is almost characterized by cup products and some characteristic classes.
Furthermore, there are also some studies for non-simply connected cases, although
the cases have many obstruction and difficulties, such as the $s$-bordism theorem and $\mathbb{L}$-theory
and Blanchfield duality in high dimensional topology (see \cite{Bla,CS,Mil,Hil}).
Meanwhile, in low dimensional topology, it is important to analyse quantitatively the fundamental group $\pi_1(Y)$ (cf. the geometrization conjecture).
That being said, as in the interaction in \cite{CS,COT},
it is sensible to ask how applicable the study of the cup products in high dimensional topology is to that in low one.

This paper focuses on twisted pairings arising from any group homomorphism $\pi_1(Y) \ra G $,
which are constructed in simple and general situations as follows:
Take a relative homology $n$-class $\mu \in H_{n} ( Y ,\partial Y ;\Z )$, and
a right $G$-module $M$ and a $G$-invariant multilinear function $ \psi : M^n \ra A $ for some ring $A$.
Then, we can easily define the composite map
\begin{equation}\label{kiso}
H^1( Y,\partial Y ; M )^{ \otimes n} \xrightarrow{\ \ \smile \ \ } H^{n}( Y ,\partial Y ; M^{\otimes n } )
\xrightarrow{ \ \ \langle \bullet , \ \mu \rangle \ \ } M^{\otimes n } \xrightarrow{\ \ \langle \psi, \ \bullet \rangle \ \ }A
. \end{equation}
Here $M$ is regarded as the local coefficient of $Y$ via $f$, and
the first map $\smile$ is the cup product, and the second (resp. third) is defined by the pairing with $ \mu$ (resp. $\psi$).

However, in general, the linear form has a critical difficulty that the relative cup product in $H^*( Y, \partial Y ; M ) $ seems speculative and uncomputable from definitions.
Actually, even if $Y$ is a surface with orientation 2-class $\mu $, the bilinear 2-form \eqref{kiso}
is complicated and includes an important example:
Precisely, if $G$ is a semisimple Lie group with Killing form $\psi $ and Lie algebra $\mathfrak{g}=M$,
the 2-form \eqref{kiso} yields a symplectic structure on the flat moduli space $\Hom (\pi_1(Y), G)/\! \!/G$ away from singular points, which is universally summarized as the Goldman Lie algebra \cite{G}.
Furthermore, concerning 3-manifolds $Y$, similar difficulties appear in ``the twisted Alexander modules $H_1(Y ; M)$" \cite{Wada,Lin};
Precisely, whereas the study has provided some topological applications (see \cite{FV,Hil}),
few papers have addressed linear forms on $H_1(Y ; M)$.
In addition,
in analysing some pairings of 3-dimensional links, some boundary conditions occur elaborate difficulty appearing in relative (co)homology; see, e.g., \cite{Bla,BE,COT}, \cite[Chapters 4--8]{Hil}.

\

In the series starting from this paper, 
we address 3-dimensional case
where $Y_L$ is the 3-manifold which is obtained from the 3-sphere by removing an open tubular
neighborhood of a link $L$, i.e., $Y_L =S^3 \setminus \nu L.$
Notice 
the relative homology groups
$$ H_3 ( Y_L ,\partial Y_L;\Z ) \cong \Z, \ \ \ \ \ \ H_2 (Y_L ,\partial Y_L ;\Z ) \cong \Z^{\# \pi_0( \partial Y_L)},$$
which are generated by the fundamental 3-class $[Y_L, \partial Y_L]$ and by some Seifert surfaces in $S^3 \setminus \nu L$, respectively.
We should emphasize that it is not easy to directly describe the 3-class and Seifert surfaces.
This point often appears to be a difficulty in many studies (see \cite{COT,Hil,Mil,T}),
e.g., the $A$-polynomial, Milnor link-invariant, and Chern-Simons invariant \cite{Zic}.

Nevertheless, this paper focuses on the bilinear case with $ n=2$, and we succeed in describing a formulation of computing the twisted pairings
\eqref{kiso} with respect to every representation $f: \pi_1(S^3 \setminus \nu L) \ra G$ of every link group (Theorem \ref{mainthm2}).
Namely, the twisted pairing \eqref{kiso} turns out to be computable from only a link diagram,
with describing no Seifert surfaces.
Actually, we can calculate the bilinear forms with respect to some representations (e.g., see Sections \ref{Lexa1}--\ref{rei.8} for the trefoil and figure eight knots), and observe some interesting phenomena.
Moreover, the subsequent paper \cite{Nos5} will show that the setting \eqref{kiso} recovers
three classical pairings: the Blanchfield pairing, twisted cup products of infinite cyclic covers, and
the Casson-Gordon local signature; hence, the main theorem enables us to compute the classical pairings.
The third paper \cite{Nos6} will deal with trilinear cases of \eqref{kiso}, i.e., $n=3$.
Furthermore, the computation of the $(m, m)$-torus link in Propositions \ref{aa11c}--\ref{aa1133c} will
be used in the studies of 4-dimensional Lefschetz fibrations; see \cite{Nos4} for the detail.
In summary, our viewpoint sheds some concrete light on the relative cup product not normally considered,
with applications including some classical topology.


Finally, we roughly explain the relation of 
relative cohomologies from diagrammatic viewpoints.
The key 
is the diagrammatic link-invariant obtained from ``quandle cocycles" \cite{CJKLS,CKS,IIJO},
where quandle is an algebraic system.
In fact, the formulation of computing the bilinear forms formulates
a generalization of the invariants associated with a certain class of quandles. 
The theorem \ref{mainthm2} implies that the link-invariants exactly coincide with the bilinear maps \eqref{kiso}.
In particular, our result gives a topological interpretation of some quandle cocycle invariants, and stress a topological serviceability of quandle theory. 

Moreover, we emphasize that
this discussion in link cases (under a weak condition of $f$) gives explicitly a homomorphism $\mathcal{L}$ from the homology $ H_1 (Y_L ;M) $ to the cohomology $ H^1 (Y_L , \partial Y_L ;M) $.
As in \cite{FV,Hil,Lin}, the former $ H_1 (Y_L ;M) $ is defined from Fox derivation, and seemingly to be a bilinear form.
However, we show a commutative diagram which relates the Fox derivation to the quandle condition (Lemma \ref{busan}), and obtain the map $\mathcal{L}$.
The condition of $f$ is compatible with linear representations
$ \pi_1(S^3 \setminus L) \ra GL_n(\widetilde{R})$ of link groups, where $\widetilde{R}$ is a Noetherian UFD which factors through the abelianization of $ \pi_1 (Y_L)$;
the associated $ H_1 (Y_L ;M)$ is called the twisted Alexander module, and has some studies \cite{FV,Hil,Wada,Lin}.
In conclusion, by composing \eqref{kiso} with $\mathcal{L}$,
we succeed in introducing bilinear forms on the twisted Alexander modules $ H_1 (Y ;M) $ of a link.


\

This paper is organized as follows. Section 2 formulates the twisted pairing by means of the quandle cocycle invariants,
and states the main theorems. Section 3 describes some computation.
In application, Section 4 introduces bilinear forms on twisted Alexander dual modules.
Section 5 proves the theorems, after reviewing the relative group cohomologies.

\

\noindent
{\bf Notation.} Every link $L$ is smoothly embedded in the 3-sphere $S^3$ with orientation.
We write $Y_L$ for the 3-manifold which is obtained from $S^3$ by removing an open neighborhood of $L$.
Further, we denote by $ \pi_L $ the fundamental group $\pi_1(Y_L)$,
and denote by $\# L$ the number of the link component, i.e., $\# L =|\pi_0( \partial Y_L)|.$
Furthermore, we fix a group homomorphism $f:\pi_L \ra G$,
and by $A$ we mean an abelian group.

\section{Results; diagrammatic formulations of the bilinear forms}\label{ss1}
Our purpose in this section is to state the main results in \S \ref{sss3}.
For this purpose, \S \ref{sss2} starts by reviewing quandles,
and formulates some link-invariants of bilinear forms.

\subsection{Preliminary; formulations of the bilinear maps}\label{sss2}
We will need some knowledge of quandles before proceeding.
A {\it quandle} \cite{Joy} is a set, $X$, with a binary operation $\tri : X \times X \ra X$ such that
\begin{enumerate}[(I)]
\item The identity $a\tri a=a $ holds for any $a \in X. $
\item The map $ (\bullet \tri a ): \ X \ra X$ defined by $x \mapsto x \tri a $ is bijective for any $a \in X$.
\item The identity $(a\tri b)\tri c=(a\tri c)\tri (b\tri c)$ holds for any $a,b,c \in X. $
\end{enumerate}
\noindent
For example, every group $ G$ is made into a quandle with the operation $g \lhd h= h^{-1}gh \in G$.
Moreover, let us explain a broad class of quandles on which this paper focuses.
Take a right $G $-module $M$, that is,
a right module of the group ring $\Z[G ]$.
Let $X= M \times G$, and define a quandle operation on $X$ by
\begin{equation}\label{kihon} \lhd: (M \times G) \times (M \times G)\lra M \times G, \ \ \ \ \ \ (a,g,b,h) \longmapsto (\ (a-b)\cdot h +b, \ h^{-1}gh \ ). \end{equation}
This quandle was first introduced in \cite[Lemma 2.2]{IIJO}.

Next, let us recall $X$-colorings, where $X$ is a quandle.
Let $D$ be an oriented link diagram of a link $L\subset S^3.$
An $X$-{\it coloring} of $D$ is a map $\mathcal{C}: \{ \mbox{arcs of $D$} \} \to X$
such that
$\mathcal{C}(\alpha_{\tau}) \lhd \mathcal{C}(\beta_{\tau}) = \mathcal{C}(\gamma_{\tau})$
at each crossings of $D$ illustrated as
the figure below. 
Let $\mathrm{Col}_X(D) $ denote the set of all $X$-colorings of $D$.
For example, for a group $X=G$ with the conjugacy operation,
the Wirtinger presentation implies that the set $ \mathrm{Col}_X(D) $ is bijective to the set of group homomorphisms
$\pi_L \ra G$. Namely
\begin{equation}\label{kihon22} \mathrm{Col}_G(D) \longleftrightarrow \mathrm{Hom}_{\rm gr}(\pi_L , G ). \end{equation}
\vskip -0.17pc

\begin{figure}[htpb]
\begin{center}
\begin{picture}(100,26)
\put(-22,27){\large $\alpha_{\tau} $}
\put(13,25){\large $\beta_{\tau} $}
\put(13,-12){\large $\gamma_{\tau} $}

\put(-36,3){\pc{kouten4}{0.27304}}

\put(143,-13){\large $\beta_{\tau} $}
\put(103,-13){\large $\alpha_{\tau} $}
\put(143,25){\large $\gamma_{\tau} $}
\end{picture}
\end{center}
\end{figure}


We will explain the subset \eqref{kihon2294} and a decomposition \eqref{skew24592} below.
By assumption, via \eqref{kihon22}, we can regard the homomorphism $f$ as a $G$-coloring of a link-diagram $D$.
Take the quandle $X= M \times G$ in \eqref{kihon} and the projection $p_G: X \ra G$.
Then, we define the set of lifts of $f$ as follows:
\begin{equation}\label{kihon2294} \mathrm{Col}_X(D_{f}):= \{ \ \mathcal{C} \in \mathrm{Col}_X(D) \ | \ p_G \circ \mathcal{C} =f \ \}. \end{equation}
It is worth noticing that the set $\mathrm{Col}_X(D) $ is regarded as a subset of the product $X^{\# \{ \textrm{arcs of }D \}}$.
Hence, the subset $\mathrm{Col}_X(D_{f})$ is made into an abelian subgroup of $M^{\# \{ \textrm{arcs of }D \}}$ according to
the linear operation \eqref{kihon}.
Further, we can easily see that the diagonal subset $M_{\rm diag} \subset M^{\# \{ \textrm{arcs of }D \}}$
is a subset of $\mathrm{Col}_X(D_{f}) $ as a direct summand in $\mathrm{Col}_X(D_{f}) $.
Denoting another summand by $ \mathrm{Col}^{\rm red}_X(D_{f}) $, we have a direct decomposition
\begin{equation}\label{skew24592}\mathrm{Col}_X(D_{f}) \cong \mathrm{Col}^{\rm red}_X(D_{f}) \oplus M .\end{equation}

Furthermore, one introduces a bilinear form on the $\Z$-module $ \mathrm{Col}_X(D_{f})$ as follows (Definition \ref{deals3}).
Taking another $G$-module $M'$, let $\psi : M \times M' \ra A$ be a bilinear map over $\Z$.
Moreover, we assume that this $\psi$ is $G$-invariant. Namely,
$$ \psi(x \cdot g, y \cdot g)= \psi(x , y ) \ \ \ \ \ \ \ \mathrm{for \ any} \ x \in M, \ y\in M' \mathrm{ \ and \ } g \in G.$$
Considering the associated quandle $X'= M' \times G$, define the map 
$ \varphi_{\psi} : X \times X' \ra A$ by setting
\begin{equation}\label{skew245}
\varphi_{\psi} \bigl( (y_1,g_1),\ (y_2,g_2) \bigr) = \psi \bigl( y_1 ,\ y_2 \cdot(1-g_2^{-1} ) \bigr), \end{equation}
which is first introduced \cite[Corollary 4.7]{Nos2}. Furthermore,
recall from (\ref{kihon2294}) the set $ \mathrm{Col}_{Z}^{\rm red}(D_{f})$ associated with $Z=X$ or $Z=X'$.
Inspired this, we define
\begin{defn}\label{deals3}
Let $X$ and $X'$ be as above, let $D = K_1 \cup \cdots \cup K_{\# L}$ be a link diagram, where $K_1, \dots, K_{\# L}$ are connected components.
For $1 \leq \ell \leq \# L$, we define a map $\mathcal{Q}_{\psi,\ell }$ by
$$ \mathrm{Col}_{X}^{\rm red}(D_{f}) \times \mathrm{Col}_{X'}^{\rm red} ( D_{f}) \lra A; \ \ \ ( \mathcal{C}, \mathcal{C}' ) \longmapsto \sum_{\tau} \epsilon_{\tau} \psi \bigl( x_{\tau}-y_{\tau} , \ y_{\tau} ' \cdot (1 - h_{\tau}^{-1}) \bigr), $$
where $\tau $ runs over all the crossings such that the under-arc is from the component $K_\ell$, and $\epsilon_{\tau} \in \{ \pm 1\}$ is the sign of $\tau$ according to the figure below.
Furthermore, the symbols $(x_{\tau}^{\bullet}, y_{\tau})\in X $ and $ (y_{\tau}^{\bullet} , \ h_{\tau})\in X' $ are the colors around the crossing $\tau.$
\end{defn}
\begin{figure}[htpb]
\begin{center}
\begin{picture}(120,26)
\put(-72,24){\large $(x_{\tau} , g _{\tau} ) $}
\put(13,24){\large $(y_{\tau} , h _{\tau} ) $}
\put(82,24){\large $(x_{\tau}' , g _{\tau} ) $}
\put(168,24){\large $(y_{\tau}' , h _{\tau} ) $}

\put(-83,2){\Large $ \mathcal{C} $}
\put(73,2){\Large $ \mathcal{C}' $}
\put(-33,5){\large $ \tau $}
\put(123,5){\large $ \tau $}

\end{picture}
\end{center}
\end{figure}

\subsection{Statements of the main theorems}\label{sss3}
As mentioned in the introduction,
we will show (Theorem \ref{mainthm2}) that the twisted cohomology pairing \eqref{kiso} is described as the previous bilinear maps 
and states some corollaries. 
The proofs of the theorems appear in \S \ref{Lbo33222}.

Next, from the view of this theorem we will reformulate the bilinear form $ \mathcal{Q}_{\psi} $ defined in \S \ref{sss2}. As mentioned in the introduction, recall
the isomorphism
$ H_{2}(Y_L ,\partial Y_L;\Z ) \cong \Z^{\# L}$
with a basis $ \mu_1, \dots, \mu_{ \# L}$ which correspond to the longitudes (or Seifert surfaces) in $S^3 \setminus L.$

\begin{thm}\label{mainthm2}Let $Y_L$ be a link complement in $S^3$ as in \S 1.
Regard the $G$-modules $M$ and $M'$as a local systems of $Y_L$ via 
$f: \pi_1(Y_L) \ra G$. 

Then, there are isomorphisms
\begin{equation}\label{g21gg33} \mathrm{Col}_{X } (D_{f}) \cong H^1(Y_L , \ \partial Y_L ;M ) \oplus M, \ \ \ \ \ \mathrm{Col}_{X }^{\rm red} (D_{f}) \cong H^1(Y_L , \ \partial Y_L ;M ) . \end{equation}
such that 
the bilinear form $ \mathcal{Q}_{\psi, \ell}$ on $ \mathrm{Col}_{X^{(')}}^{\rm red} (D_{f})$ is
equal to the following composite (cf. \eqref{kiso}):
$$ H^1( Y_L ,\partial Y_L ; M )\otimes H^1( Y_L ,\partial Y_L ; M' ) \xrightarrow{\ \ \smile \ \ } H^2( Y_L ,\partial Y_L ; M\otimes M' )
\xrightarrow{ \ \langle \bullet , \mu_\ell \rangle \ } M \otimes M' \xrightarrow{ \ \ \langle \psi, \bullet \rangle \ \ }A . $$
\end{thm}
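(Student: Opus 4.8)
The plan is to pass both sides of the asserted identity to relative group cohomology and then to compare them at the cochain level, crossing by crossing, so that the geometrically inaccessible $2$-classes $\mu_\ell$ are only ever handled combinatorially. First I would invoke the model of $H^{*}(Y_L,\partial Y_L;M)$ by the relative group cohomology of $\pi_L$ and its peripheral system, as recalled in \S\ref{Lbo33222}; this lets me present both $Y_L$ and the pair $(Y_L,\partial Y_L)$ from the diagram $D$ alone, with arcs as Wirtinger generators, crossings as relations, and the longitudes carrying the relative (boundary) data.

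Next I would establish the identification \eqref{g21gg33}. Writing a lift in $\mathrm{Col}_X(D_f)$ as $\alpha\mapsto(\xi(\alpha),f(\alpha))$, the $M$-component of the quandle law \eqref{kihon} at a crossing becomes $\xi(\gamma)=\xi(\alpha)\cdot g_\beta+\xi(\beta)(1-g_\beta)$. This is close to, but not literally, the Fox/crossed-homomorphism equation defining a twisted $1$-cocycle: the two differ by a meridian twist (replacing $1-g_\beta$ by $1-g_\gamma$ after conjugation). Reconciling them is exactly the content of Lemma \ref{busan}, whose commutative diagram supplies the change of variables turning a coloring into a genuine relative $1$-cocycle. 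Under this dictionary the constant (diagonal) colorings become the canonical split summand $M$ of \eqref{skew24592}, while the complement $\mathrm{Col}^{\mathrm{red}}_X(D_f)$ is carried isomorphically onto $H^1(Y_L,\partial Y_L;M)$, which is \eqref{g21gg33}.

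The substance is the second claim. I would fix an explicit diagonal approximation on the chosen resolution, so that the relative cup product $H^1\otimes H^1\to H^2$ is represented by an Alexander--Whitney-type cochain formula, and I would represent each $\mu_\ell$ by an explicit relative $2$-cycle assembled from precisely those crossings whose under-arc lies on $K_\ell$ --- the whole point being to encode the Seifert surface combinatorially instead of drawing it. Evaluating $\langle c\smile c',\mu_\ell\rangle$ then localizes the product onto the $2$-cells, namely the crossings, and each crossing $\tau$ contributes $\epsilon_\tau\,\varphi_\psi$ of its incident reduced colors, with $\varphi_\psi$ the pairing \eqref{skew245} of \cite{Nos2}; the sum over such $\tau$ is exactly the expression in Definition \ref{deals3}. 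The factor $1-h_\tau^{-1}$ should emerge from the longitudinal (peripheral) term of the relative boundary, and the $G$-invariance of $\psi$ is what makes each crossing-local contribution well defined and the total independent of the presentation.

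The hard part will be this third step: producing simultaneously an explicit relative $2$-cycle for $\mu_\ell$ built only from crossing data and a cochain-level diagonal approximation whose pairing against that cycle collapses to one term per crossing, with the correct sign $\epsilon_\tau$ and the correct factor $1-h_\tau^{-1}$. The relative, peripheral bookkeeping is the delicate ingredient, since every boundary contribution must cancel or be absorbed in order that no Seifert surface be needed; I expect the compatibility encoded in Lemma \ref{busan} to carry this. Once the cochain identity holds for a single diagram, invariance under Reidemeister moves is automatic from the topological meaning (or may be checked move-by-move as a consistency test), and the announced comparison with the quandle cocycle invariants of \cite{CKS,IIJO} then follows as the statement that such an invariant is the evaluation \eqref{kiso}.
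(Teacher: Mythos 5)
Your outline follows the same broad strategy as the paper (pass to relative group cohomology, turn colorings into relative $1$-cocycles, pair an explicit cochain-level cup product against a combinatorial $2$-cycle), but it has a genuine gap at exactly the step you flag as hard, and the tool you invoke to fill it cannot do so. You twice delegate the decisive work to Lemma \ref{busan}: once to convert the coloring law into the crossed-homomorphism law, and once to make the peripheral boundary contributions cancel. Lemma \ref{busan} does neither. It is a statement about the \emph{cokernel} of $\Gamma_{\overline{X},D}$, i.e.\ about the homology $H_1(Y_L;M)$, and it holds only over the localized ring $A_{(\partial f)}$ of \eqref{aaa}, where every $\mathrm{id}-\rho\otimes f_{\rm pre}(\mathfrak{m})$ has been inverted --- an assumption unavailable in the generality of Theorem \ref{mainthm2}. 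In the paper the ``meridian twist'' you correctly identify is handled instead by the elementary map $\kappa(m,g)=(m\cdot g-m,\,g)$ of \eqref{g21gg}, which needs no invertibility; and the genuinely nontrivial part of \eqref{g21gg33} is not the crossing-by-crossing dictionary at all, but verifying that the resulting homomorphism $\widetilde{f}_{\mathcal{C}}$ satisfies the \emph{longitude} condition \eqref{1v24} --- the computation \eqref{deq221} along the circular path $\mathcal{P}_\ell$ --- which is what makes the cocycle relative rather than absolute. Your proposal does not address this point.

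For the pairing itself, your expectation that one can write a relative $2$-cycle for $\mu_\ell$ directly in $C_2(\pi_L,\partial\pi_L;\Z)$ and have the evaluation collapse to one term per crossing is not how the cancellation is achieved, and I see no way to make it work as stated: the natural cycle $\hat{\mu}_\ell^{\rm pre}$ built from the meridians $\mathfrak{m}_{\ell,k}^{\epsilon_k}$ lives only in the complex relative to the \emph{enlarged} peripheral family $\partial\pi_L\sqcup\mathfrak{M}_\ell$, so the paper must (i) lift cocycles along the maps $\zeta_\ell$, (ii) carry out the telescoping computation of Lemma \ref{2g2s} matching the three pieces of $\hat{\mu}_\ell^{\rm pre}$ against the formula \eqref{deqqg} (note the value is not literally ``$\epsilon_\tau\varphi_\psi$ per crossing'' but a telescoped sum that only reorganizes into Definition \ref{deals3}), and (iii) descend to a genuine class in $C_2(\pi_L,\partial\pi_L;\Z)$ by adding the correction cycles $\nu_{\alpha,\gamma}$ and $d_i$ of Lemma \ref{2g2s22}, whose pairings vanish. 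Steps (i)--(iii) are the actual content of the theorem's proof; your proposal names the difficulty but supplies neither the enlarged-family device nor the correction cycles, and points to a lemma that lives in the wrong (homological, localized) setting.
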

As a concluding remark,
we should emphasize again that
we can compute the cohomology pairing of links from only a link diagram without describing longitudes (or Seifert surfaces) in $S^3 \setminus L$.
Moreover, as seen in Definition \ref{}, the pairing seems computable in an easy way (see \S \ref{Lb162r3} for the examples). 

In addition, we see that 
the bilinear form in Definition \ref{deals3}
formulates a generalization of the quandle cocycle invariants \cite{CJKLS,IIJO} with respect to quandles of the forms $X= X'= M \times G $.
The link invariants \cite{CJKLS,CKS}, constructed from
a quandle $X$ and a map $\Psi: X^2 \ra A $
which satisfies ``the quandle cocycle condition",
were defined to be a certain map $\mathcal{I}_{\Psi}: \mathrm{Col}_X(D )\ra A $.
%
Then, we can see that the map $ \varphi_{\psi} $ in \eqref{skew245} which is a quandle 2-cocycle,
and verify the equality
$\mathcal{I}_{\varphi_{\psi} } = \mathcal{Q}_{\psi } \circ \bigtriangleup $ by construction,
where $\bigtriangleup : \mathrm{Col}_X(D )\ra \mathrm{Col}_X(D )^2$ is
the diagonal map.
To sum up, as a result of Theorem \ref{mainthm2},
we have succeeded in describing entirely a topological meaning of the quandle cocycle invariants.

\

Next, we mention two properties which are used in the papers \cite{Nos5,Nos4}.
We then discuss a non-degeneracy or duality of $ \mathcal{Q}_{\psi, \ell} $.
However, 
we should mention the connecting map $\delta^*: H^{0} ( \partial Y_L ; M ) \ra H^1( Y_L ,\partial Y_L; M)$.
Actually, if $M=M'$ and if $\mathbf{x} \in \mathrm{Im}(\delta^*) $, then
the two vanishings
$ \mathcal{Q}_{\psi, \ell}( \mathbf{x},\mathbf{y})= \mathcal{Q}_{\psi, \ell}( \mathbf{y},\mathbf{x})=0$
hold for any $ \mathbf{y} \in \mathrm{Col}_{X } (D_{f}) $.
(cf. Theorem \ref{mainthm14} later).
\begin{cor}[See \S \ref{yy43} for the proof.]\label{mainthm1}Let $Y_L$ be a link complement in $S^3$ as in \S 1.
For each link component $\ell$, fix a meridian $\mathfrak{m}_{\ell} \in \pi_1(Y_L) $.
If the maps $ \mathrm{id}_M-f( \mathfrak{m }_{\ell } ): M \ra M $ are isomorphisms for any $\ell \leq \#L$, 
then the inclusion $(Y_L, \emptyset) \ra (Y_L, \partial Y_L )$ induces
the isomorphisms
$ H^1(Y_L ,\partial Y_L ;M) \cong H^1(Y_L ;M)$ and $ \mathrm{Im}(\delta^*) \cong 0$.

In particular, 
the decomposition in \eqref{g21gg33} is written as 
$\mathrm{Col}_{X }^{\rm red} (D_f ) \cong H^1(Y_L ;M ) .$
\end{cor}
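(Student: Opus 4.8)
The plan is to read everything off the long exact cohomology sequence of the pair $(Y_L, \partial Y_L)$ with coefficients in the local system $M$,
$$\cdots \ra H^0(Y_L ;M) \ra H^0(\partial Y_L ;M) \xrightarrow{\ \delta^* \ } H^1(Y_L ,\partial Y_L ;M) \xrightarrow{\ j^* \ } H^1(Y_L ;M) \ra H^1(\partial Y_L ;M) \ra \cdots ,$$
in which $j^*$ is exactly the map induced by the inclusion $(Y_L, \emptyset) \ra (Y_L, \partial Y_L)$. If one knows that the hypothesis forces $H^0(\partial Y_L ;M) = H^1(\partial Y_L ;M) = 0$, then exactness at once gives $\mathrm{Im}(\delta^*) = 0$ (its source vanishes) and makes $j^*$ an isomorphism $H^1(Y_L ,\partial Y_L ;M) \cong H^1(Y_L ;M)$ (injectivity from $\ker j^* = \mathrm{Im}\,\delta^* = 0$, surjectivity from the vanishing of the target $H^1(\partial Y_L ;M)$). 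Composing this isomorphism with the identification $\mathrm{Col}^{\rm red}_X(D_f) \cong H^1(Y_L ,\partial Y_L ;M)$ of \eqref{g21gg33} then yields the last assertion. Thus the corollary reduces entirely to the vanishing of the boundary cohomology.

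To prove that vanishing, I would use that $\partial Y_L$ is a disjoint union of tori $T_1 \sqcup \cdots \sqcup T_{\# L}$, so that $H^*(\partial Y_L ;M) = \bigoplus_{\ell} H^*(T_\ell ;M)$, and that each $T_\ell$ is a $K(\Z^2,1)$ with $\pi_1(T_\ell) = \langle \mathfrak{m}_\ell , \lambda_\ell \rangle$ generated by the meridian $\mathfrak{m}_\ell$ and a longitude $\lambda_\ell$. Hence $H^*(T_\ell ;M)$ is computed by the standard Koszul-type complex for $\Z^2$-cohomology, $M \xrightarrow{\, d^0 \,} M \oplus M \xrightarrow{\, d^1 \,} M$, with $d^0(m) = (m(s-1),\, m(t-1))$ and $d^1(a,b) = a(t-1) - b(s-1)$, where I abbreviate $s = f(\mathfrak{m}_\ell)$ and $t = f(\lambda_\ell)$ acting on the right of $M$. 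Because $\mathfrak{m}_\ell$ and $\lambda_\ell$ commute in $\Z^2$, the operators $s-1$ and $t-1$ commute on $M$, and consequently so do $(s-1)^{-1}$ and $t-1$ once $s-1$ is invertible; this is the structural fact the computation rests on.

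The crux is then a one-line linear-algebra argument exploiting that $s-1 = -(\mathrm{id}_M - f(\mathfrak{m}_\ell))$ is, by assumption, an isomorphism. For degree $0$, $\ker d^0 \subset \ker(s-1) = 0$, so $H^0(T_\ell ;M) = 0$. For degree $1$, take $(a,b) \in \ker d^1$, i.e.\ $a(t-1) = b(s-1)$, and set $m = a(s-1)^{-1}$; then $m(s-1) = a$, while $m(t-1) = a(s-1)^{-1}(t-1) = a(t-1)(s-1)^{-1} = b(s-1)(s-1)^{-1} = b$, using the commutation above together with the cocycle relation. Hence $(a,b) = d^0(m) \in \mathrm{Im}\,d^0$, so $H^1(T_\ell ;M) = 0$. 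Summing over $\ell$ gives $H^0(\partial Y_L ;M) = H^1(\partial Y_L ;M) = 0$, which is precisely what the first paragraph needs. I expect this commutation step---solving the $1$-cocycle equation via $(s-1)^{-1}$---to be the only genuine obstacle; the vanishing of $H^0$ and the entire passage through the exact sequence are formal.
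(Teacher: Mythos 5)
Your proof is correct, and the key computation is in fact the same as the paper's, but the packaging is genuinely different. The paper's proof in \S\ref{yy43} never invokes the long exact sequence of the pair: working entirely in its non-homogeneous mapping-cone complex, it constructs an explicit inverse to the projection $Z^1(\pi_L,\partial \pi_L;M)\to Z^1(\pi_L;M)$. Concretely, for $\widetilde{f}\in Z^1(\pi_L;M)$ with $\widetilde{f}(\mathfrak{m}_\ell)=(b_\ell,f(\mathfrak{m}_\ell))$ and $\widetilde{f}(\mathfrak{l}_\ell)=(c_\ell,f(\mathfrak{l}_\ell))$, the commutativity of $(\mathfrak{m}_\ell,\mathfrak{l}_\ell)$ forces $c_\ell\bigl(\mathrm{id}-f(\mathfrak{m}_\ell)\bigr)=b_\ell\bigl(\mathrm{id}-f(\mathfrak{l}_\ell)\bigr)$, and setting $a_\ell=b_\ell\bigl(\mathrm{id}-f(\mathfrak{m}_\ell)\bigr)^{-1}$ yields $c_\ell=a_\ell\bigl(\mathrm{id}-f(\mathfrak{l}_\ell)\bigr)$, so that $\widetilde{f}\mapsto(\widetilde{f},a_1,\dots,a_{\#L})$ is a relative $1$-cocycle by the characterization in Lemma \ref{clAl1}. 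With $s=f(\mathfrak{m}_\ell)$, $t=f(\mathfrak{l}_\ell)$ this is literally your torus cocycle equation $c(s-1)=b(t-1)$ solved via $(s-1)^{-1}$ and the commutation of $s-1$ with $t-1$; your $H^1(T_\ell;M)=0$ step is the paper's surjectivity, and your $H^0(T_\ell;M)=0$ step corresponds to the uniqueness of $a_\ell$ (well-definedness of the section), while the paper disposes of $\mathrm{Im}(\delta^*)$ in one remark. What your route buys: it is more modular and proves the stronger statement $H^0(\partial Y_L;M)=H^1(\partial Y_L;M)=0$, from which the corollary follows formally. What the paper's route buys: an explicit cochain-level splitting $\widetilde{f}\mapsto(\widetilde{f},a_1,\dots,a_{\#L})$, compatible with the identification $\mathrm{Col}_X(D_f)\cong Z^1(\pi_L,\partial\pi_L;M)$ of \eqref{g21gg33}, whose data $a_\ell, b_j$ are reused in the construction of the maps $\zeta_\ell$ in \S\ref{yy4324}. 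Two small points to make your write-up airtight within the paper's framework: cite Remark \ref{clAl221} (or the short exact sequence of complexes $0\to\bigoplus_\ell C^{*-1}(K_\ell;M)\to C^{*}(\pi_L,\partial\pi_L;M)\to C^{*}(\pi_L;M)\to 0$, which is where the long exact sequence comes from in the mapping-cone definition) to justify identifying the paper's relative group cohomology with $H^{*}(Y_L,\partial Y_L;M)$; and note that invertibility of $\mathrm{id}-f(\mathfrak{m})$ is conjugation-invariant, so the hypothesis for one fixed meridian per component covers the subgroup $\partial_\ell \pi_L$ up to the conjugacy ambiguity of its inclusion.
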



On the other hand, the invariance with respect to conjugacy is immediately shown;
\begin{cor}\label{lem373d}
Let $\phi$ be a $G$-bilinear map as above,
and let $f$ and $f' $ be two homomorphisms $\pi_L \ra G$.
If there is $g \in G$ such that
$f(\mathfrak{m})= g^{-1}f'(\mathfrak{m})g \in G$ for any meridian $\mathfrak{m} \in \pi_L$,
then the resulting bilinear maps $\mathcal{Q}_{\psi ,\ell }$ and $\mathcal{Q}'_{\psi ,\ell } $ are equivalent.
\end{cor}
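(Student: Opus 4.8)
The plan is to realize the conjugation hypothesis as an explicit quandle automorphism of $X = M \times G$ and to use it to transport colorings while intertwining the two bilinear forms. Concretely, for the fixed $g \in G$ I would define
$$\Phi_g : X \lra X, \qquad (a, h) \longmapsto (a \cdot g^{-1}, \ g h g^{-1}),$$
together with the analogous map $\Phi_g'$ on $X' = M' \times G$. A direct substitution into the operation \eqref{kihon} shows that $\Phi_g$ respects $\lhd$: applied to $(a,g_1) \lhd (b,g_2)$ and to $\Phi_g(a,g_1) \lhd \Phi_g(b,g_2)$, both sides produce $M$-component $(a-b) g_2 g^{-1} + b g^{-1}$ and $G$-component $g(g_2^{-1} g_1 g_2) g^{-1}$, so $\Phi_g$ is a quandle automorphism. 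This is the step where the correct twist $a \mapsto a \cdot g^{-1}$ on the $M$-factor must be pinned down; the verification itself is routine.

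Next I would use the fact that, under the Wirtinger correspondence \eqref{kihon22}, the arcs of $D$ are labelled by meridians $\mathfrak{m}_\alpha$, so an $f$-coloring assigns to $\alpha$ a color whose $G$-part is $f(\mathfrak{m}_\alpha)$. Because $\Phi_g$ is an automorphism, $\Phi_g \circ \mathcal{C}$ is again an $X$-coloring, and its $G$-part on $\alpha$ is $g f(\mathfrak{m}_\alpha) g^{-1} = f'(\mathfrak{m}_\alpha)$ by hypothesis. Hence $\mathcal{C} \mapsto \Phi_g \circ \mathcal{C}$ gives an isomorphism $\mathrm{Col}_X(D_f) \cong \mathrm{Col}_X(D_{f'})$, and likewise for $X'$. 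Since the constant ($M$-diagonal) colorings are sent to constant colorings, this isomorphism respects the splitting \eqref{skew24592} and descends to an isomorphism $\mathrm{Col}_X^{\rm red}(D_f) \cong \mathrm{Col}_X^{\rm red}(D_{f'})$, which is the pair of identifications realizing the claimed equivalence.

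Finally I would check compatibility of the forms crossing by crossing. At a crossing $\tau$ the colors $(x_\tau, g_\tau), (y_\tau, h_\tau)$ of $\mathcal{C}$ are sent to colors with $M$-parts $x_\tau g^{-1}, y_\tau g^{-1}$ and over-arc $G$-part $g h_\tau g^{-1}$, and the same twist applies to $\mathcal{C}'$. Substituting into the summand of $\mathcal{Q}'_{\psi,\ell}$ and simplifying using $(g h_\tau g^{-1})^{-1} = g h_\tau^{-1} g^{-1}$ gives
$$\psi\bigl( (x_\tau - y_\tau) \cdot g^{-1}, \ (y_\tau' \cdot (1 - h_\tau^{-1})) \cdot g^{-1} \bigr) = \psi\bigl( x_\tau - y_\tau, \ y_\tau' \cdot (1 - h_\tau^{-1}) \bigr),$$
where the equality is exactly the $G$-invariance of $\psi$ applied to $g^{-1}$. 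Summing over the crossings whose under-arc lies on $K_\ell$ then yields $\mathcal{Q}'_{\psi,\ell}(\Phi_g \circ \mathcal{C}, \Phi_g' \circ \mathcal{C}') = \mathcal{Q}_{\psi,\ell}(\mathcal{C}, \mathcal{C}')$, the asserted equivalence. The only place requiring genuine thought is the first step: choosing the automorphism with the right $M$-twist so that it simultaneously covers conjugation on $G$ and makes the $g^{-1}$ factors cancel against $G$-invariance. Everything afterward is forced.
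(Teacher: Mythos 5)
Your proposal is correct: $\Phi_g(a,h) = (a\cdot g^{-1},\, ghg^{-1})$ is indeed an automorphism of the quandle $X = M\times G$ covering conjugation by $g$, it carries $\mathrm{Col}_X(D_f)$ isomorphically onto $\mathrm{Col}_X(D_{f'})$ while preserving the diagonal summand, and the $G$-invariance of $\psi$ makes the crossing-by-crossing summands of $\mathcal{Q}_{\psi,\ell}$ and $\mathcal{Q}'_{\psi,\ell}$ agree termwise. The paper states this corollary without proof (asserting the invariance is ``immediately shown''), and your argument is precisely the direct verification it intends.
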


Finally, we give a special corollary of Theorem \ref{mainthm2},
when $G $ is the free abelian group $\Z^{\# L}$ and $f: \pi_L \ra \Z^{\# L}$ is the canonical abelianization.
Writing $t_1, \dots, t_{\# L }$ for generators of $ \Z^{\# L} $,
we can consider the $G$-module $M$ to be a module over
the Laurent polynomial ring $ \Z[t_1^{\pm 1}, \dots, t_{\# L }^{\pm 1}] $.
Then, Theorem \ref{mainthm2} immediately deduces a topological meaning on the set of colorings.
\begin{cor}\label{le359}
Let $L$ be a link, and $f$ be its abelianization $\mathrm{Ab} : \pi_L \ra G= \Z^{\# L} $. Take a $ \Z[t_1^{\pm 1}, \dots, t_{\# L }^{\pm 1}] $-module $M$.
Then, we have a $ \Z[t_1^{\pm 1}, \dots, t_{\# L }^{\pm 1}] $-module isomorphism
$$ \mathrm{Col}_X(D_{f}) \cong H^1( Y_L, \partial Y_L ;M ) \oplus M. $$
\end{cor}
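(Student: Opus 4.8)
The plan is to derive Corollary \ref{le359} directly as the special case of Theorem \ref{mainthm2} in which $G=\Z^{\#L}$ and $f=\mathrm{Ab}$ is the abelianization, so almost no new work is required beyond tracking the extra module structure. First I would invoke the first isomorphism in \eqref{g21gg33}, namely $\mathrm{Col}_X(D_f)\cong H^1(Y_L,\partial Y_L;M)\oplus M$, which already holds for an arbitrary homomorphism $f$; specializing $f$ to the abelianization is legitimate and produces the stated additive isomorphism on the nose. The only content remaining is to upgrade this from a $\Z$-module isomorphism to a $\Z[t_1^{\pm1},\dots,t_{\#L}^{\pm1}]$-module isomorphism.

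The key observation is that when $G=\Z^{\#L}$ is abelian, the conjugation action $h^{-1}gh$ in the quandle operation \eqref{kihon} is trivial on the $G$-coordinate, so a $G$-coloring lifting $f$ carries, at each arc, a fixed group element recording the image of the corresponding meridian under $\mathrm{Ab}$. Consequently the generators $t_1,\dots,t_{\#L}$ act on $\mathrm{Col}_X(D_f)$ through the $G$-module structure of $M$ in the first coordinate of \eqref{kihon}, and this action is exactly the scalar action by $t_i\in\Z[G]=\Z[t_1^{\pm1},\dots,t_{\#L}^{\pm1}]$. Thus I would check that the abelian group structure on $\mathrm{Col}_X(D_f)$ coming from the linear operation \eqref{kihon} is compatible with multiplication by each $t_i$, making it a module over the Laurent polynomial ring. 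On the cohomology side, $H^1(Y_L,\partial Y_L;M)$ is automatically a $\Z[G]$-module because $M$ is, and the abelianization $f$ factors through $\Z^{\#L}$, so the local system makes the whole group into a $\Z[t_1^{\pm1},\dots,t_{\#L}^{\pm1}]$-module as well.

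It then remains to verify that the isomorphism of \eqref{g21gg33}, which is natural in $M$, is in fact $\Z[G]$-linear and not merely $\Z$-linear. I would do this by inspecting how the isomorphism is constructed in the proof of Theorem \ref{mainthm2} (in \S \ref{Lbo33222}): the map is assembled from the coloring data arc-by-arc, and every step uses only the right $G$-module structure of $M$ intertwined with $f$, so scalar multiplication by $t_i$ commutes with the identification. Because the $M$-summand is the diagonal submodule $M_{\mathrm{diag}}$ on which $G$ acts through the single abelianized meridian class, the splitting \eqref{skew24592} is itself $\Z[G]$-equivariant, and the direct sum decomposition passes to the module level verbatim.

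The main obstacle I anticipate is not in the additive isomorphism, which is immediate, but in confirming the $\Z[t_1^{\pm1},\dots,t_{\#L}^{\pm1}]$-equivariance of the already-established map of Theorem \ref{mainthm2}: one must ensure that the chosen identification $\mathrm{Col}^{\rm red}_X(D_f)\cong H^1(Y_L,\partial Y_L;M)$ is a map of modules, i.e.\ that the combinatorial $t_i$-action defined via the coloring operation \eqref{kihon} matches the deck-transformation action on twisted cohomology. Once that compatibility is checked, the corollary follows formally, since $\Z[\Z^{\#L}]$ is precisely the Laurent polynomial ring $\Z[t_1^{\pm1},\dots,t_{\#L}^{\pm1}]$ and the module $M$ was assumed to be a module over it.
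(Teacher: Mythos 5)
Your proposal is correct and follows essentially the same route as the paper, which offers no separate argument for Corollary \ref{le359} beyond remarking that it is an immediate specialization of Theorem \ref{mainthm2} (the first isomorphism in \eqref{g21gg33}) to $f=\mathrm{Ab}$, with the module structure supplied by the identification $\Z[\Z^{\# L}]=\Z[t_1^{\pm 1},\dots,t_{\# L}^{\pm 1}]$. Your additional verification that the isomorphism $\Omega$ (built arc-by-arc from $\kappa$ in \eqref{g21gg}) and the splitting \eqref{skew24592} are $\Z[t_1^{\pm 1},\dots,t_{\# L}^{\pm 1}]$-equivariant, using the commutativity of the group ring of $\Z^{\# L}$, is precisely the routine check the paper leaves implicit.
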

\begin{rem}
Let us compare Theorem \ref{mainthm2} with the previous papers, and mention some leap forwards.
Concerning the set $\mathrm{Col}_X(D_{f})$, many papers have dealt with only the case $G=\Z$ (which is commonly called ``the Alexander quandle $X$"; see \cite{CJKLS}).
However, as seen in \cite[\S 12]{Joy} or \cite{CDP} and references therein,
while some papers discussed a connection to Alexander polynomials in knot case,
few papers analysed a relation between ${\rm Col}_X(D)$ and Alexander polynomials (or module) if $\# L >1$.
Corollary \ref{le359} implies a conclusive remark that the set $ {\rm Col}_X(D)$ is interpreted not only by usual (group) homologies,
but by relative ones.


\end{rem}

\section{Bilinear forms on the twisted Alexander modules of links}\label{LFsu32b12r2223}
The purpose of this section is to define bilinear forms
on the twisted Alexander (dual) modules (Definition \ref{twisted}).
According to most papers on the twisted polynomial (see \cite{FV,Wada,Lin}), we mean by $R$ a (commutative)
a Noetherian unique factorization domain (henceforth UFD), with involution $\bar{}: R \ra R$.

\subsection{Preliminaries}\label{ss131}
For this purpose, we start by briefly reviewing
the twisted Alexander module
associated with two group homomorphisms
$$f_{\rm pre} : \pi_L \ra GL_n(R), \ \ \ \mathrm{and } \ \ \ \rho :\pi_L \ra \Z^m$$
for some $m \in \mathbb{N}$.
Identifying the group ring, $R [\Z^{m}]$ , of $\Z^{m} $ with the polynomial ring $R [t_1^{\pm 1}, \dots, t_{m}^{\pm 1}]$,
the map $\rho$ is extended to a representation $ \pi_L \ra \mathrm{End}_R( R [t_1^{\pm 1}, \dots, t_{m}^{\pm 1}]) $.
Hence, tensoring this $ \rho$ with $ f_{\rm pre}$, we have a representation
$$ \rho\otimes f_{\rm pre} : \pi_L \lra GL_n ( R [t_1^{\pm 1}, \dots, t_{m }^{\pm 1}]) .$$
Thus, the associated first homology $H_1(Y_L ;R[t_1^{\pm 1}, \dots, t_{m }^{\pm 1}]^n ) $ is commonly called {\it the twisted Alexander module} associated with $f_{\rm pre} $; see a survey \cite{FV} on twisted Alexander polynomials.

This Alexander module can be described from %
the Fox derivative as follows.
Take a diagram $D$ with $\alpha_D = \beta_D$, where
$\alpha_D $ (resp. $\beta_D$) is the number of the arcs (resp. crossings).
Let us denote this $\alpha_D$ by $\alpha$ in short, and consider the Wirtinger presentation 
$\langle x_1 , \ldots , x_{\alpha} | r_1 , \ldots , r_{\alpha} \rangle $ of $\pi_L$.
Let $F_m$ be the free group of rank $m $.
Here, recall that 
there uniquely exists, for each $x_j $, a Fox derivative
$ \frac{\partial \ \ }{\partial x_j} : F_\alpha \ra R [\Z^{m}][F_\alpha ]$
with the following two properties:
$$ \frac{\partial x_i}{\partial x_j} = \delta_{i,j}, \ \ \ \ \ \ \frac{\partial (uv)}{\partial x_j} = \frac{\partial u }{\partial x_j}v +\frac{\partial v}{\partial x_j},$$
for all $u, v \in F_\alpha$.
Then, as is known (see, e.g., Exercise \cite[\S II.5]{Bro}), we
can describe a partial resolution of $\pi_L$ over $A_{(\partial f )}$ as 
\begin{equation}\label{skddew22} (R [\Z^{m}][\pi_L ])^{\alpha }\xrightarrow{ \ \partial_2 \ } (R [\Z^{m}] [\pi_L])^{\alpha } \xrightarrow{ \ \partial_1 \ } R [\Z^{m}] [\pi_L] \stackrel{\epsilon}{\lra} R [\Z^{m}] \lra 0 \ \ \ \ \ \ \ ( \mathrm{exact})
\end{equation}
such that the matrix of $ \partial_2$ is the $(\alpha \times \alpha)$-Jacobian matrix $ ( [\frac{\partial r_i}{\partial x_j}] )$,
and the latter $ \partial_1 $ is defined by $ \partial_1 (\gamma)=1 - \gamma $.
Accordingly, after tensoring with a $R [\Z^{m}]$-module $M$, 
the common quotient $\Ker (\mathrm{id}_M \otimes \partial_1)/\Im (\mathrm{id}_M \otimes \partial_2 )$ is isomorphic to
the first group homology $H_1 (Y_L ;M)$ with local coefficients.

Next, we will set up a localized ring \eqref{aaa} below, and review the twisted Alexander polynomial \cite{Wada,Lin}.
%
For this purpose, assume the non-vanishings
\begin{equation}\label{aaaaa} \mathrm{det}(\mathrm{id} - \rho\otimes f_{\rm pre} (\mathfrak{m} )) \neq 0 \in R[\Z^m]\end{equation}
for every meridian $\mathfrak{m} \in \pi_L$:
A typical example is the case $\rho( \mathfrak{m}) \neq 0 $ in $\Z^m$ for every meridian $\mathfrak{m} $,
such as the abelianization $\pi_L \ra \Z^{\# L}$.
Then, the assumption enables us to define the ring $A_{(\partial f)}$
obtained by inverting the determinants.
Precisely, we set
\begin{equation}\label{aaa}A_{(\partial f)} :=R [t_1^{\pm 1}, \dots, t_{m}^{\pm 1}, \ \prod_{ \ell \leq \# L} \mathrm{det}(\mathrm{id}- \rho\otimes f_{\rm pre} (\mathfrak{m}_{\ell} ))^{-1}
(\mathrm{id}- \overline{\rho\otimes f_{\rm pre} (\mathfrak{m}_{\ell} )} )^{-1} ] .
\end{equation}
We remark that $A_{(\partial f)}$ is also a Noetherian UFD, and that $A_{(\partial f)}$ has the involution $ \bar{ \ } :A_{(\partial f)} \ra A_{(\partial f)}$ defined by $\bar{ t_i} =t_i^{-1}$.
This localization \eqref{aaa} can be interpreted
as a generalization of ``localized Blanchfield pairing" (see \cite[\S 2.6]{Hil}), let us set up 
Then, {\it the twisted Alexander polynomial}, $\Delta_{f}$, is defined to be the $n^2(\alpha-1)^2$ Jacobian of the Fox derivations \eqref{skddew22} subject to \eqref{aaaaa}:
$$\Delta_{f}:= \mathrm{det} \Bigl(([\frac{\partial r_i}{\partial x_j}] )\otimes \mathrm{id}_{A_{(\partial f)}^n })_{1 \leq i,j \leq \alpha -1}\Bigr) /
\mathrm{det}(\mathrm{id} - f_{\rm \mathcal{I}} (x_{\alpha} )) \in A_{(\partial f)}. $$
It is shown \cite{Wada} that the value is independent, up to units, of the choice of the arcs $\alpha$.


In addition, we mention a close relation to the colorings set.
Recall that the subset $\mathrm{Col}_X(D_{f})$ 
is a submodule of the product $M^{\alpha_D}$ according to
the linear operation \eqref{kihon}.
More precisely,
$\mathrm{Col}_X(D_{f})$
can be regarded as the kernel of
the homomorphism
\begin{equation}\label{fukuoka}
\Gamma_{X,D} : M^{\alpha_D} \lra M^{\beta_D} \end{equation}
obtained from \eqref{kihon}. Furthermore, let us examine the cokernel $\Coker (\Gamma_{ X,D }) $:
\begin{lem}\label{busan}
For any link $L$, choose a diagram $D$ with $\alpha_D =\beta_D $.
Consider the quandle $ \overline{X} $ of the form $ M \times G$, where $M$ is the free module $ (A_{(\partial f)})^n $ and $G$ is $GL_n(A_{(\partial f)} ) $.

Then, the cokernel has the following isomorphism
$$\Coker (\Gamma_{ \overline{X},D }) \cong H_1(Y_L ; (A_{(\partial f)} )^n) \oplus( A_{(\partial f)}) ^n.$$
Here the second summand $(A_{(\partial f)})^n$ corresponds to the diagonal subset $A_{\rm diag}$ of $ (A_{(\partial f)})^{n \beta_D } $.
\end{lem}
\begin{proof}
From the definition of the ring $A_{(\partial f)} $ in \eqref{aaa}, every $\mathrm{id}- \rho\otimes f_{\rm pre} (\gamma_i) $ is invertible in $M$;
The map $\mathrm{id}_M \otimes \partial_1$ is a
(diagonally) splitting surjection,
which admits consequently a decomposition
$$\mathrm{Coker} ( \mathrm{id}_M \otimes \partial_2 : M^{\alpha_D }\lra M^{\beta_D } ) \cong H_1 (\pi_L ;M) \oplus M .$$
Here, regarding a crossing $\tau$ illustrated as in Figure \ref{koutenpn},
let us
set up the bijection $\kappa_{\tau } : M \ra M$ which takes $ m$ to $ m -m \cdot \rho\otimes f_{\rm pre} (\alpha_{\tau })$,
and $\kappa_{\tau }' : M\ra M$ which sends $ m$ to $ m -m \cdot \rho\otimes f_{\rm pre}(\gamma_{\tau }) $.
Then, by the direct products with respect to crossings $\tau $, we have the diagram
$${\normalsize
\xymatrix{
0 \ar[r] & \mathrm{Col}_{\overline{X}}(D_f) \ar[r] &M^{\alpha_D } \ar[rr]^{\Gamma_{X,D }}\ar[d]_{\prod_{\tau }\kappa_{\tau } } & & M^{\alpha_D } \ar[r] \ar[d]^{\prod_{\tau } \kappa_{\tau }'}& \mathrm{Coker}(\Gamma_{X,D} ) \ar[r] & 0 & (\mathrm{exact}) \\
& & M^{\alpha_D } \ar[rr]^{ \mathrm{id}_M \otimes \partial_2 } & & M^{\alpha_D } \ar[r] & H_1 (Y_L ;M) \oplus M \ar[r] & 0 & (\mathrm{exact}).
}}
$$
Examining carefully the definitions of $\kappa_{\tau }^{(')}$, $ \partial_2 $, and $ \Gamma_{\overline{X},D} $,
the diagram is commutative. Hence, the vertical maps give the desired decomposition $ \mathrm{Coker}(\Gamma_{\overline{X},D} ) \cong H_1 (Y_L ;M) \oplus M $.
\end{proof}

Finally,
we briefly set up an extension of a bilinear form.
For this, suppose 
a bilinear function $ \psi_{\rm pre} : R^n \times R^n \ra R$ satisfying the $f_{\rm pre}$-invariance 
$$ \psi_{\rm pre} (x,y) = \psi_{\rm pre} ( x \cdot f_{\rm pre}( \mathfrak{m}),\ y \cdot f_{\rm pre}( \mathfrak{m}) ) $$
for any $x,y \in R^n$, and any meridian $ \mathfrak{m} \in \pi_L $.
For an ideal $ \mathcal{I} \subset A_{(\partial f)}$, we let $ \overline{\mathcal{I}} \subset A_{(\partial f)}$ be the ideal consisting of $x \in A_{(\partial f)} $ with $\bar{x} \in \mathcal{I}.$
Then, we can define the bilinear function
\[\psi : ( R^n \otimes_R A_{(\partial f)}/ \overline{\mathcal{I}} ) \times (R^n \otimes_R A_{(\partial f)}/ \mathcal{I}) \lra A_{(\partial f)}/ \mathcal{I}\]
by setting
\begin{equation}\label{aaaaaa} \psi ( x \otimes a_1 , y \otimes a_2) = \psi_{\rm pre} (x,y) \otimes \overline{a_1} a_2,\end{equation}
for $x,y \in R^n $ and $a_1, \ a_2 \in A_{(\partial f)} $. This $\psi $ is $\pi_L $-invariant and sesquilinear over $R[\Z^{ m }] $.


\subsection{Definition}\label{ss33131}

Inspired by Lemma \ref{busan}, we will introduce map from the twisted Alexander module $ H_1 (Y_L ;M)$ to a certain relative cohomology.
After that, by composing with the bilinear form $\mathcal{Q}_{\psi}$, we define a bilinear form on the module $ H_1 (Y_L ;M)$.

For this, consider 
the principal ideal $\mathcal{I}$ generated by $\Delta_f$.
\begin{equation}\label{aaabb} f_{\mathcal{I}}: \pi_L \lra GL_n(A_{(\partial f)} / \mathcal{I}) ,\end{equation}
by passage to $ \mathcal{I}.$
Then, it is sensible to set up the local coefficients $M=(A_{(\partial f)})^n $, $M_{\Delta}:= (A_{(\partial f)}/ \mathcal{I} )^n $ and $M_{\overline{\Delta}}:= (A_{(\partial f)}/ \overline{\mathcal{I}} )^n $ acted on by \eqref{aaabb}.
Here, the reason why we here need 
the ideal $ \mathcal{I}$ is as follows.
In many cases, 
the twisted Alexander modules are often torsion $A_{(\partial f)} $-modules, which are annihilated by $ \Delta_f$ ; see, e.g., \cite{FV,Wada}.
Therefore, to get non-trivial linear function from such modules,
the coefficient ring shall be the quotient $A_{(\partial f)}/( \Delta_f)$.


Next, we will explain Definition \ref{twisted} after introducing two homomorphisms $\mathrm{Adj }$ and $ \mathcal{L}$.
Considering the decomposition $(A_{(\partial f)}) ^{n \alpha_D }=(A_{(\partial f)}) ^{n ( \alpha_D -1) } \oplus M_{\rm diag}$,
we take the restriction $$ \mathrm{res} (\Gamma_{ \overline{X} ,D}) : (A_{(\partial f)}) ^{n ( \alpha_D -1)} \ra ( A_{(\partial f)}) ^{n ( \alpha_D -1)}. $$
of \eqref{fukuoka}.
Then, it follows from Theorem \ref{mainthm2} and Lemma \ref{busan} above that the adjugate matrix
of $\mathrm{res} (\Gamma_{ \overline{X} ,D} ) $ subject to $(\Delta_f) $ yields a well-defined homomorphism
\begin{equation}\label{ppo}\mathrm{Adj } : H_1(Y_L ; ( A_{(\partial f)}) ^n) \lra \mathrm{Col}_X^{\rm red}(D_f) \cong H^1(Y_L, \partial Y_L ; M_{\Delta}).
\end{equation}
where $X$ is $ (M_{\Delta})^m\times G$ as the quotient of $\overline{X}.$
Furthermore, notice that the localization $ R[ \Z^{m } ] \hookrightarrow A_{(\partial f)}$ gives rise to 
the homomorphism
\[ \mathcal{L} : H_1(Y_L ; R[ \Z^{m} ] ^n) \lra
H_1(Y_L ; (A_{(\partial f)})^n).\] 

\begin{defn}\label{twisted}
Let $R$ be a Noetherian UFD, and $A_{(\partial f)}$ and $ G $ be as above.
Take $\mathcal{I}= (\Delta_f) $, and $M_{\Delta} = ( A_{(\partial f)} /\mathcal{I})^n .$
Let $\psi :M_{\overline{\Delta}} \times M_{\Delta} \ra A_{(\partial f)}/\mathcal{I} $ be the bilinear form obtained from $\psi_{\rm pre}$, as in \eqref{aaaaaa}.

Then, we define
the bilinear map from the twisted Alexander module
associated with $ (f_{\rm pre}$, $\psi_{\rm pre})$ to be the following
composite
\[ H_1(Y_L ; R[ \Z^{m } ]^n)^{ \otimes 2} \xrightarrow{ \ \mathrm{Adj }^{\otimes 2}\circ \mathcal{L} ^{\otimes 2} \ }
H^1(Y_L, \partial Y_L ; M_{\overline{\Delta}}) \otimes H^1(Y_L , \partial Y_L; M_{\Delta}) \xrightarrow{ \ \mathcal{Q}_{\psi } \ } A_{(\partial f)}/\mathcal{I} .\]
\end{defn}
%

By definition and Theorem \ref{mainthm2}, we should emphasize that it is not hard to compute the pairing from $ \mathcal{Q}_{\psi } $.
For instance as in Example \ref{extra2}, the computation of $ \mathcal{Q}_{\psi } $ implies the twisted pairing equal to $ 2 x \bar{x}'$.

\

Finally, we end this section by mentioning a duality.
In general, such a duality always do not holds; we should consider a restricted situation:
Let $m=1$, let $R$ be a field $\mathbb{F} $ of characteristic 0, and $ \mathcal{I}$ be the principal ideal $(\Delta_f )$.
Then, we can easily show the following lemma in linear algebra.
\begin{lem}\label{mainthm1224}
Assume that $\Delta_f$ is non-zero and that $\mathrm{det}( t^{-1} \cdot \mathrm{id}_{\F^n} - f_{\rm pre} (\mathfrak{m} )) \neq 0$ for a meridian $\mathfrak{m}\in \pi_L $ is relatively prime to the polynomial
$\Delta_{f}$.
Then the adjugate matrix $\mathrm{Adj }$ in \eqref{ppo} is an $\F[t^{\pm 1}]$-isomorphism.
\end{lem}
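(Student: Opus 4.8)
The plan is to translate the statement into linear algebra over the localized ring $A_{(\partial f)}$ and then exploit the defining relation $\mathrm{adj}(B)\,B = B\,\mathrm{adj}(B) = \det(B)\cdot \mathrm{id}$ of the adjugate. First I would fix the square matrix $B := \mathrm{res}(\Gamma_{\overline{X},D})$ of size $n(\alpha_D-1)$ over $A_{(\partial f)}$ and record the two identifications already available: by Lemma \ref{busan} the source $H_1(Y_L;(A_{(\partial f)})^n)$ is the cokernel $\Coker_{A_{(\partial f)}}(B)$, while by Theorem \ref{mainthm2} applied to the quandle $X=M_{\Delta}\times G$ the target $H^1(Y_L,\partial Y_L;M_{\Delta})\cong \mathrm{Col}^{\mathrm{red}}_X(D_f)$ is the kernel $\Ker_{A_{(\partial f)}/(\Delta_f)}(\overline{B})$ of the reduction $\overline B = B \bmod \Delta_f$. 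Under these identifications $\mathrm{Adj}$ is exactly the map induced by $\mathrm{adj}(B)\bmod \Delta_f$, which lands in $\Ker(\overline B)$ and kills $\Im(\overline B)$ because $\det(\overline B)\equiv 0$.

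The second step is a determinant bookkeeping. From the definition of $\Delta_f$ as the truncated Jacobian determinant divided by $\det(\mathrm{id}-f_{\mathcal I}(x_\alpha))$, together with the $\kappa_\tau,\kappa'_\tau$-conjugation supplied by the commutative diagram of Lemma \ref{busan} (whose determinants are units in $A_{(\partial f)}$), I would show that $\det(B)$ is associate, in $A_{(\partial f)}$, to $\Delta_f\cdot \det(\mathrm{id}-t\,f_{\mathrm{pre}}(\mathfrak m))$. Here the relevant factor $\det(\mathrm{id}-t\,f_{\mathrm{pre}}(\mathfrak m))$ equals $t^{n}\det(t^{-1}\mathrm{id}_{\F^n}-f_{\mathrm{pre}}(\mathfrak m))$, which is one of the determinants inverted in \eqref{aaa}; hence it is a unit of $A_{(\partial f)}$ and $\det(B)$ generates the same ideal as $\Delta_f$.

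This is where the coprimality hypothesis does its work. Because $R=\F$ and $m=1$, the ring $A_{(\partial f)}$ is a localization of the PID $\F[t^{\pm1}]$, so $B$ admits a Smith normal form $B=UDV$ with $D=\mathrm{diag}(d_1,\dots,d_N)$, $d_1\mid\cdots\mid d_N$ and $\prod_i d_i\sim \Delta_f$. The coprimality of $\det(t^{-1}\mathrm{id}-f_{\mathrm{pre}}(\mathfrak m))$ with $\Delta_f$ guarantees that the inverted denominator is a unit modulo $\Delta_f$, so that $A_{(\partial f)}/(\Delta_f)\cong \F[t^{\pm1}]/(\Delta_f)$ is a genuine Artinian quotient of $\F$-dimension $\deg\Delta_f$ and no invariant factor $d_i$ is accidentally inverted. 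Then I would compute coordinatewise: $\mathrm{adj}(D)=\mathrm{diag}(\Delta_f/d_i)$ sends $A_{(\partial f)}/(d_i)\cong \Coker(D)_i$ isomorphically onto $\mathrm{Ann}_{A_{(\partial f)}/(\Delta_f)}(d_i)=(\Delta_f/d_i)\cdot A_{(\partial f)}/(\Delta_f)\cong \Ker(\overline D)_i$, since $d_i\mid\Delta_f$; conjugating by the units $\mathrm{adj}(U),\mathrm{adj}(V)$ transports this to the assertion that $\mathrm{adj}(B)\bmod\Delta_f$ induces an isomorphism $\Coker(B)\xrightarrow{\ \sim\ }\Ker(\overline B)$.

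I expect the main obstacle to be the determinant bookkeeping of the second step: one must match the size-$n(\alpha_D-1)$ determinant of $\mathrm{res}(\Gamma_{\overline X,D})$ with the normalized twisted Alexander polynomial $\Delta_f$, carefully accounting for the conjugating factors $\det\kappa_\tau,\det\kappa'_\tau$ and the denominator $\det(\mathrm{id}-f_{\mathcal I}(x_\alpha))$, and to verify that coprimality is exactly the condition under which this denominator becomes invertible modulo $\Delta_f$, so that $\Coker(B)$ and $\Ker(\overline B)$ have equal length. Once the orders agree, the adjugate is forced to be an isomorphism rather than merely a well-defined length-preserving map.
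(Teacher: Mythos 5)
Your proof is correct, and it supplies an argument the paper omits entirely: the author disposes of Lemma \ref{mainthm1224} with the single remark that ``we can easily show the following lemma in linear algebra,'' so your write-up is best read as filling in exactly the linear algebra the paper gestures at. Your route --- identifying the source and target with $\Coker_{A_{(\partial f)}}(B)$ and $\Ker(\overline{B})$ via Lemma \ref{busan} and Theorem \ref{mainthm2}, matching $\det B$ with $\Delta_f$ up to the factor $\det(\mathrm{id}-t\,f_{\mathrm{pre}}(\mathfrak{m}))$ which is inverted in \eqref{aaa}, and then running the adjugate through a Smith normal form over the localized PID --- is sound, and you place the coprimality hypothesis where it genuinely bites: it makes the inverted determinants units modulo $\Delta_f$, so that $A_{(\partial f)}/(\Delta_f)\cong\F[t^{\pm 1}]/(\Delta_f)$ is the honest quotient and the lengths of $\Coker(B)$ and $\Ker(\overline{B})$ agree, forcing the length-preserving adjugate map to be an isomorphism.
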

As seen in Examples \ref{extra22} and \ref{extra2}, this pairing is often degenerate in many cases,
and is possible to be even zero,
while the classical Blanchfield pairing is non-singular.
However, the subsequent paper \cite{Nos5} will show a duality theorem on the twisted pairings, under assumptions:
\begin{thm}[{A corollary of \cite[Theorem 2.4]{Nos5}}]\label{mainthm14}
Let $m=1$ and let $R$ be a field of characteristic 0, and $ \mathcal{I}$ be the principal ideal $(\Delta_f )$.
Further, assume that $\Delta_f \neq 0$, and $\psi_{\rm pre}$ is nondegenerate. 
If $\mathrm{det}( \mathrm{id}_{\F^n} - t \cdot f_{\rm pre} (\mathfrak{m} )) \neq 0$ for a meridian $\mathfrak{m} \in \pi_L $ is relatively prime to
$\Delta_{f}$ in $ \mathbb{F}[t] $, then the twisted pairing in Definition \ref{twisted} is non-degenerate.
\end{thm}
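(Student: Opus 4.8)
The plan is to reduce the statement to elementary linear algebra over a principal ideal domain, using the adjugate identity together with the Smith normal form. Throughout write $A=A_{(\partial f)}$, which under the present hypotheses ($m=1$, $R=\F$) is a localization of $\F[t^{\pm 1}]$ and hence a PID, and write $\overline{A}=A/(\Delta_f)$. Set $B:=\mathrm{res}(\Gamma_{\overline{X},D})$, the square matrix over $A$ appearing in \eqref{ppo}, and let $\overline{B}$ be its reduction modulo $\Delta_f$. First I would record the two identifications on which everything rests: by Lemma \ref{busan} the source is $H_1(Y_L;A^n)\cong \Coker(B)$, while by Theorem \ref{mainthm2} the target is $H^1(Y_L,\partial Y_L;M_{\Delta})\cong \mathrm{Col}^{\rm red}_X(D_f)=\Ker(\overline{B})$. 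Here the coprimality hypothesis enters decisively: since $\det(t^{-1}\,\mathrm{id}_{\F^n}-f_{\rm pre}(\mathfrak m))$ agrees up to the unit $t^{-n}$ with $\det(\mathrm{id}-f(\mathfrak m))$, being relatively prime to $\Delta_f$ is exactly the condition that $\mathrm{id}-f(\mathfrak m)$ be invertible over the quotient module $M_{\Delta}$; by Corollary \ref{mainthm1} this forces $H^1(Y_L,\partial Y_L;M_{\Delta})\cong H^1(Y_L;M_{\Delta})$ with vanishing connecting map, so that $\Ker(\overline{B})$ really is the full target rather than a proper piece of it.

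Second, I would pin down the normalization $\det(B)\doteq\Delta_f$ (equality up to a unit of $A$). This follows from the commutative square of Lemma \ref{busan}: the vertical maps $\prod_{\tau}\kappa_{\tau}$ and $\prod_{\tau}\kappa'_{\tau}$ are products of the operators $\mathrm{id}-f(\cdot)$ along meridional arcs, each of whose determinants is inverted in $A$ by the very definition \eqref{aaa}, while the restricted Fox Jacobian has determinant the numerator of $\Delta_f$, which differs from $\Delta_f$ only by the unit $\det(\mathrm{id}-f(x_{\alpha}))$. Granting this, the adjugate identity $\mathrm{adj}(B)\,B=B\,\mathrm{adj}(B)=\Delta_f\cdot\mathrm{id}$ shows that $v\mapsto \overline{\mathrm{adj}(B)\,v}$ is well defined on $\Coker(B)$ (if $v=Bw$ then $\mathrm{adj}(B)v=\Delta_f w\equiv 0$) and lands in $\Ker(\overline{B})$ (since $\overline{B}\,\overline{\mathrm{adj}(B)}=0$); this is precisely the map $\mathrm{Adj}$ of \eqref{ppo}.

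Third comes the heart of the computation. Write a Smith normal form $B=UDV$ with $U,V\in GL_N(A)$, $N=n(\alpha_D-1)$, and $D=\mathrm{diag}(d_1,\dots,d_N)$ with $d_1\mid\cdots\mid d_N$ and $\prod_i d_i\doteq\Delta_f$. Because $U,V$ reduce to invertible matrices over $\overline{A}$, a change of basis identifies $\mathrm{Adj}$, up to the global unit $\det U\,\det V$, with the diagonal map $\overline{\mathrm{adj}(D)}=\mathrm{diag}\bigl(\overline{\Delta_f/d_i}\bigr)$ acting $\bigoplus_i A/(d_i)\ra\bigoplus_i\Ker(\overline{d_i})$. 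On each factor, since $d_i\mid\Delta_f$ and $A$ is a domain, one checks $\Ker(\overline{d_i})=(\Delta_f/d_i)/(\Delta_f)\cong A/(d_i)$ and that multiplication by $\Delta_f/d_i$ is an isomorphism $A/(d_i)\xrightarrow{\ \sim\ }\Ker(\overline{d_i})$; assembling the factors shows $\mathrm{Adj}$ is bijective, and as every map in sight is $A$-linear this gives the claimed $\F[t^{\pm 1}]$-isomorphism.

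The step I expect to be the main obstacle is the first one: making the two topological identifications watertight over the quotient coefficients $M_{\Delta}$ and verifying that the coprimality hypothesis is exactly what guarantees the target is $\Ker(\overline{B})$, with the correct dimension $\dim_{\F}A/(\Delta_f)$ matching that of $\Coker(B)$. Once the problem is honestly reduced to the single matrix $B$ over the PID $A$ with $\det(B)\doteq\Delta_f$, the Smith-form argument of the third paragraph is routine.
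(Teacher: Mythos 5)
What you have actually proved is Lemma \ref{mainthm1224}, not Theorem \ref{mainthm14}. Your reduction to the single matrix $B=\mathrm{res}(\Gamma_{\overline{X},D})$, the normalization $\det(B)\doteq\Delta_f$ via the commutative square of Lemma \ref{busan}, and the Smith-normal-form computation showing $\mathrm{Adj}$ is bijective are all sound, and they match the paper's remark that Lemma \ref{mainthm1224} is ``easily shown'' by linear algebra. But the conclusion of Theorem \ref{mainthm14} is non-degeneracy of the \emph{composite} pairing of Definition \ref{twisted}, and bijectivity of $\mathrm{Adj}$ is only one ingredient: you must still show that the cohomological pairing $\mathcal{Q}_{\psi}$ is itself non-degenerate on $H^1(Y_L,\partial Y_L;M_{\overline{\Delta}})\otimes H^1(Y_L,\partial Y_L;M_{\Delta})$. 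Your argument never touches this, and — tellingly — never uses the hypothesis that $\psi_{\rm pre}$ is nondegenerate. A bijective $\mathrm{Adj}$ composed with a degenerate cup-product pairing is still degenerate, and the paper's own Examples \ref{extra22} and \ref{extra2} exhibit exactly this phenomenon when the hypotheses fail. The missing content is a duality theorem of Blanchfield/Poincar\'e--Lefschetz type over $A_{(\partial f)}/(\Delta_f)$ (using characteristic $0$, the coprimality condition to kill boundary contributions via Corollary \ref{mainthm1}, and nondegeneracy of $\psi_{\rm pre}$); this is precisely what the present paper does not prove — Theorem \ref{mainthm14} is stated as a corollary of \cite[Theorem 2.4]{Nos5}, with the proof deferred to the sequel — so no amount of matrix manipulation inside this paper's toolkit can close it.

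A secondary gap concerns $\mathcal{L}$. The pairing in Definition \ref{twisted} is defined on $H_1(Y_L;\F[t^{\pm 1}]^n)$, whereas your first identification silently replaces this domain by its localization $H_1(Y_L;(A_{(\partial f)})^n)\cong\Coker(B)$. Any kernel of the localization map $\mathcal{L}$ lies in the radical of the composite pairing, so non-degeneracy on the stated domain requires showing $\mathcal{L}$ is injective (at least on the relevant $\Delta_f$-torsion, using that the inverted determinants are coprime to $\Delta_f$); your proposal does not address this. Finally, a small point: Corollary \ref{mainthm1} needs $\mathrm{id}-f(\mathfrak{m}_{\ell})$ invertible for \emph{every} link component, while the theorem's coprimality hypothesis is stated at a single meridian, so in the genuine link case your first step would require the hypothesis componentwise.
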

Here, recall from the known fact of Milnor \cite{Mil2} that all the (skew-)hermitian nondegenerate bilinear forms with isometries $t$ is completely characterised.
In conclusion, if $ \psi$ is (skew-)hermitian, we can obtain computable information from the twisted pairing.


\section{Examples as diagrammatic computations}\label{Lb162r3}
As a result of Theorem \ref{mainthm2} on the twisted pairings,
we will compute the bilinear forms $ \mathcal{Q}_{\psi}$
associated with some homomorphisms $f : \pi_L \ra G$, where $L$ is one among the trefoil knot, the figure eight knot and the $(m,m)$-torus link $T_{m,m}$. The reader may skip this section.

\subsection{The trefoil knot }\label{Lexa1}

\begin{figure}[htpb]
\begin{center}
\begin{picture}(10,60)
\put(-163,23){\large $\alpha_1 $}
\put(-148,45){\large $\alpha_2 $}
\put(-91,23){\large $\alpha_3 $}
\put(-23,51){\large $\alpha_2 $}
\put(-44,19){\large $\alpha_1 $}
\put(15,10){\large $\alpha_4 $}
\put(37,19){\large $\alpha_3 $}

\put(88,51){\large $\alpha_1 $}
\put(87,-2){\large $\alpha_m $}
\put(88,24){\large $\alpha_i $}

\put(91,10){\normalsize $\vdots $}
\put(91,38){\normalsize $\vdots $}

\put(-161,22){\pc{coloringtrefoil4}{0.53104}}
\put(-36,22){\pc{pic12b}{0.402104}}
\put(96,26){\pc{mutorus3}{0.2473104}}
\end{picture}
\end{center}\caption{\label{ftf} The trefoil knot, the figure eight knot and the $T_{m,m}$-torus link with labeled arcs.}
\end{figure}
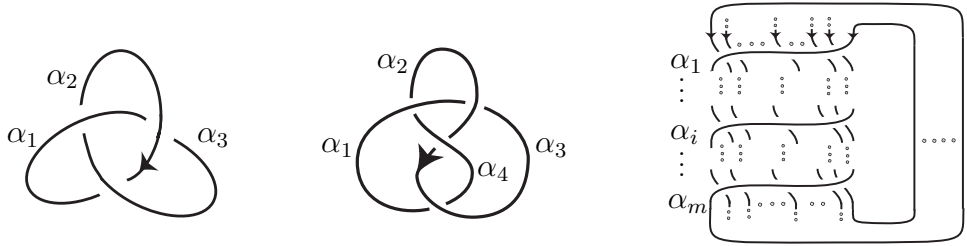

As a simple example,
we will focus on the the trefoil knot $K $.
Let $D$ be the diagram of $K$ as illustrated in Figure \ref{ftf}.
Note the Wirtinger presentation $\pi_L \cong \langle \alpha_1, \alpha_2 \ | \ \alpha_1 \alpha_2 \alpha_1 =\alpha_2 \alpha_1 \alpha_2 \rangle . $
Then we can easily see
that a correspondence $ \mathcal{C}: \{ \alpha_1, \alpha_2, \alpha_3\} \ra X$
with $\mathcal{C}(\alpha_i )=(x_i ,z_i) \in M \times G $
is an $X$-coloring $ \mathcal{C}$ over $f: \pi_L \ra G$,
if and only if it satisfies the four equations
\begin{equation}\label{eweq22} z_i = f(\alpha_i ), \ \ \ \ \ z_1 z_2 z_1 =z_2 z_1 z_2, \notag
\end{equation}
\begin{equation}\label{eq22} x_3 = x_1 \cdot z_2 +x_2 \cdot (1-z_2 ),
\end{equation}
\begin{equation}\label{eq221} (x_1 -x_2) \cdot (1-z_1 +z_2z_1)=(x_1 -x_2) \cdot (1-z_2 +z_1z_2)=0 . \end{equation}
In particular, Theorem \ref{mainthm2} concerning $\mathrm{Col}_X^{\rm red}(D_f)$ says the isomorphism
$$ H^1(Y_K,\partial Y_K;M ) \cong
\bigl\{ \ x \in M \ \bigr| \ x \cdot (1-z_1 +z_2z_1)=x\cdot (1-z_2 +z_1z_2)=0 \ \bigr\}. $$
Further, given a $G$-invariant linear form $\psi$,
the bilinear form $\mathcal{Q}_{\psi} (\mathcal{C}, \ \mathcal{C}' ) $ is expressed as
$$ \psi \bigl(x_1 -x_2, x_2'(1 -z_2^{-1}) \bigr)+ \psi \bigl(x_2 -x_3, x_3'(1 -z_3^{-1}) \bigr)+\psi \bigl(x_3 -x_1, x_1'(1 -z_1^{-1}) \bigr) \in A ,$$
by definition.
Furthermore, by \eqref{eq22}, the set $ \mathrm{Col}_{X } (D_{f}) $ is generated by the two elements $x_1, x_2$;
Accordingly, it can be seen from \eqref{eq221} that the form $\mathcal{Q}_{\psi}$ is reduced to
\begin{equation}\label{eq2211}\mathcal{Q}_{\psi} \bigl( (x_1, x_2),( x_1', x_2' ) \bigr)= \psi (x_1 -x_2 , \ (x_1' -x_2')\cdot (z_1 - z_1^{-1 }) ) \in A, \end{equation}
where $(x_1^{(')}, x_2^{(')}) \in {\rm Col}_{X^{(')}} (D_{f})\subset (M^{(')})^2 $.
It is worth noting that, if $\psi$ is symmetric, $ \mathcal{Q}_{\psi}$ is zero.
Hence, we should discuss non-symmetric $ \psi$'s.

From the above expressions, we will deal with three concrete representations:
\begin{exa}[cf. Blanchfield pairing]\label{extra1}
Let $f: \pi_L \ra G=\Z= \langle t^{\pm 1 } \rangle$ be the abelianization. 
Then, the equation \eqref{eq221} becomes $(x_1 -x_2)(t^2 -t+1)=0.$
Hence, 
$$ \col_X(D_f) \cong M \oplus \mathrm{Ann}(t^2 -t+1 ).$$
We note that $t^2 -t+1$ is equal to the Alexander polynomial $\Delta_K$ of $K$.
For any elements $x$ and $ x'$ in the annihilator submodule,
the formula \eqref{eq2211} implies
$$ \mathcal{Q}_{\psi} ( x,\ x' )= \psi ( x , \ x' \cdot (2t - 1 ) ).$$
In order to discuss non-trivial cases, for instance, we let $X $ and $ A$ be the PID $\Z [t ]/(t^2 -t+1) $ and $
\psi (y, z)= \bar{y} z $.
Then, the bilinear form $(1-t)\mathcal{Q}_{\psi}$ is summarized to $(1+t) \bar{x} x' $
that is the $(1+t)$-multiple of the Blanchfield pairing, $\bar{x} x' $, as predicted in \cite[Theorem 2.1]{Nos5}.
\end{exa}

\begin{exa}\label{extra22}
Consider $f: \pi_L \ra GL_1(\mathbb{F}[t^{\pm 1 }])= (\mathbb{F}[t^{\pm 1 }])^{\times }$
that sends $\alpha_i$ to $2t $.
Then, the equation \eqref{eq221} becomes $ (x_1 -x_2)(1-2 t+4 t^2)=0$;
hence we should consider $M=A= \mathbb{F}[t]/(1-2 t+4 t^2 ) $, which is not reciprocal.
Furthermore, we can easily see that there is a non-trivial $A$-linear form $ \psi: M^2 \ra A$
if and only if Char$( \mathbb{F}) $=3.
Further, while the associated form $\psi$ of Char$( \mathbb{F}) $=3 is non-degenerate,
\eqref{eq2211} implies degeneracy of the bilinear form
$$\mathcal{Q}_{\psi}(x_1 -x_2 ,x_1' -x_2')=(1-t)(\bar{x_1} -\bar{x_2})(x_1' -x_2').$$
In particular, we have $ (1-t)\mathcal{Q}_{\psi} =0$,
which implies that the form $\mathcal{Q}_{\psi} $ is not always non-degeneracy (cf. Theorem \ref{mainthm14}).
\end{exa}

\begin{exa}[$SL_2$-representations]\label{extra2}
Let $R_{\rm pre}$ be $\Z[s^{\pm 1}, t^{\pm 1}]$ with $\bar{t}=t^{-1}$ and $\bar{s} =s$. 
As considered in the twisted Alexander polynomials,
we will focus on a representation $ f_{\rm pre}: \pi_L \ra SL_2(R_{\rm pre} )$ defined by
$$ f_{\rm pre} (\alpha_1 ) = t \cdot \left(
\begin{array}{cc}
s & 1 \\
0 &s^{-1}
\end{array}
\right), \ \ \ \ \ f_{\rm pre}(\alpha_2 ) = t \cdot \left(
\begin{array}{cc}
s & 0 \\
1- s^2 - s^{-2} & s^{-1}
\end{array}
\right). $$
Here we remark the known fact that the twisted Alexander module is $ \Z[s^{\pm 1}, t^{\pm 1}]/ (t^2 +1)$. 
So, following \S \ref{LFsu32b12r2223}, we shall define $R$ to be $\Z[s^{\pm 1}, t^{\pm 1}]/ (t^2 +1) $
and $M=R^2$, and consider the quotient representation $ f: \pi_L \ra GL_2 ( R)$.
Then, as a solution of \eqref{eq2211}, it can be seen that $ {\rm Col}_X^{\rm red}(D_f )\cong R$ with a basis in
$ {\rm Col}_X^{\rm red}(D_f ) \subset M^2 \cong R^2 \oplus R^2 $ is represented as
$$ \vec{x}= \bigl((0,0), ( ( 1 - s^{-1} t +st )x , \ s t x ) \bigr) , $$
for some $x, \ y, \ z \in R^{\times }$.
Further, we will compute the form $ \mathcal{Q}_{\psi}$,
where an $SL_2(R )$-invariant bilinear form $\psi : (R^2)^{\oplus 2} \ra R$ is the determinant that sends
$ ((a,b),(c,d))$ to $\bar{a}d-\bar{b}c$.
By \eqref{eq2211}, we have
\begin{align}
\lefteqn{ }
\mathcal{Q}_{\psi} \bigl( \vec{x} , \vec{x'} \bigr) &=\det \Bigl( \bar{ \vec{x}}, \ \vec{x'} \bigl( t \left(
\begin{array}{cc}
s & 1 \\
0 &s^{-1}
\end{array}
\right)-t^{-1 }\left(
\begin{array}{cc}
s^{-1} & -1 \\
0 &s
\end{array}
\right) \bigr) \Bigr) \notag \\
&= (s+s^{-1})t \left|
\begin{array}{cc}
( 1- s^{-1} t^{-1} +s t^{-1} ) \bar{x} & ( 1- s^{-1} t +s t )x' \\
st^{-1} \bar{x} & s t x '
\end{array}
\right|
= 2 (s^2 +1 ) \bar{x} x'. \notag
\end{align}
In summary, the concluding point is that the degenerate value $\mathcal{Q}_{\psi } ( \vec{x} , \vec{x'} )$ depends on $s$,
while the twisted module $ \Z[s^{\pm 1}, t^{\pm 1}]/ (t^2 +1)$ does not.

Furthermore, we comment on the non-degeneracy from the viewpoint of Theorem \ref{mainthm14}.
Following from \eqref{aaa},
we shall set up the localized ring $ A= \Z[s^{\pm 1}, t^{\pm 1}][(t-s)(t-s^{-1})^{-1}]/ \mathcal{I}$
with ideal $ \mathcal{I}=(t^2+1)$
and take the resulting representation $ f_{\rm \mathcal{I}}: \pi_L \ra SL_2(A )$.
Then, since we can replace $ x$ by $(t-s)^{-1}x$,
the form $\mathcal{Q}_{\psi} ( \vec{x} , \vec{x'} )$ becomes $2 \bar{x} x' $.
Hence, it is non-degenerate, as indicated in Theorem \ref{mainthm14}.
\end{exa}

\subsection{The figure eight knot}\label{rei.8}
Next, we will compute some $\mathcal{Q}_{\psi}$'s of the figure eight knot.
However, the computation can be done in a similar way to the previous subsection. Thus, we only outline the computation.

Let $D$ be the diagram with arcs as illustrated in Figure \ref{ftf}.
Similarly, we can see
that a correspondence $ \mathcal{C}: \{ \alpha_1, \alpha_2, \alpha_3, \alpha_4\} \ra X$
with $\mathcal{C}(\alpha_i )=(x_i ,z_i) \in M \times G $
is an $X$-coloring $ \mathcal{C}$ over $f: \pi_L \ra G$,
if and only if it satisfies the following equations:
\begin{equation}\label{eq225dd} z_i = f(\alpha_i ), \ \ \ \ \ z_2^{-1} z_1 z_2= z_1^{-1} z_2^{-1}z_1 z_2 z_1^{-1} z_2 z_1 \in G , \end{equation}
\begin{equation}\label{eq225} x_3 = (x_1 -x_2 )\cdot z_2 +x_2 , \ \ \ \ \ \ x_4 = (x_2 -x_1 )\cdot z_1 +x_1, \end{equation}
\begin{equation}\label{eq2215} (x_1 -x_2) \cdot (z_1 + z_2 - 1 )=(x_1 -x_2) \cdot (1-z_2^{-1} ) z_1 z_2 =(x_1 -x_2) \cdot (1-z_1^{-1} ) z_2 z_1 \in M.
\end{equation}
Accordingly, it follows from \eqref{eq225} that the set $ \mathrm{Col}_{X } (D_{f}) $ is generated by $x_1, x_2$;
Given a $G$-invariant bilinear form $\psi$,
it can be seen that the bilinear form $\mathcal{Q}_{\psi}$ is 
expressed as
\begin{equation}\label{eq2211522}\mathcal{Q}_{\psi} \bigl( (x_1, x_2),( x_1', x_2' ) \bigr)= \psi \bigl( x_1 -x_2 , \ (x_1' -x_2') \cdot (1-z_1^{-1} - z_2^{-1} + z_1 z_2^{-1}+ z_2 z_1^{-1})\bigr) \in A, \end{equation}
where $(x_1^{(')}, x_2^{(')}) \in {\rm Col}_{X^{(')}} (D_{f})\subset (M^{(')})^2 $.
We will examine
concrete representations.
\begin{exa}[Elliptic representations]\label{extra222}
Let us set up the situation.
Fix a field $\mathbb{F} $ of characteristic $0.$
Then, we will employ the elliptic representation $f: \pi_1(S^3 \setminus K) \ra SL_2 (\F [t^{\pm 1}] ) $ such that
$$f(\alpha_1)= t \cdot \left(
\begin{array}{cc}
s & 1 \\
0 & s^{-1}
\end{array}
\right), \ \ \ \ \ f(\alpha_2)= t \cdot \left(
\begin{array}{cc}
s & 0 \\
u +1 & s^{-1}
\end{array}
\right),
$$
for some $s,u \in \mathbb{F}^{\times}$ with $\bar{s}=s$ and $ \bar{u }=u$.
We can easily check from \eqref{eq225dd} that $s$ and $u$ must satisfy $ P_{s, u }=0$, where 
$ P_{s, u } := s^2+s^{-2} + u+ u^{-1} -1. $
To state only simple results,
we now assume that $u$ is a quadratic solution of $ P_{s, u } $ (if $u \not{\!\! \in}\ \! \mathbb{F}$, we shall replace $\mathbb{F}$ by a field extension by $ P_{s, u } $).
In addition, we will consider two cases.

\

\noindent
(i) Assume $s+s^{-1} \neq \pm 1$.
Let us consider the canonical action of $ SL_2(\F )$ on $\F^2$.
Then, following \cite{Lin,Wada}, 
the twisted Alexander polynomial $\Delta_f $ associated with $f$ turns out to be
$t^2- 2(s + s^{-1})t+1 $.
Then, 
similar to Example \ref{extra2},
let us define the ring $A$ as $ \mathbb{F}[t]/ ( \Delta_f ) $, and define $M=M'$ as $A^2$ with action.
Then, by the help of computer to solve \eqref{eq2211},
we can verify $ {\rm Col}_X^{\rm red}(D_f )\cong A$ with a basis in
$ {\rm Col}_X^{\rm red}(D_f ) \subset A \cong A^2 \oplus A^2 $ represented as
$$ \vec{x}= \bigl((0,0), ( x , \ \frac{ s^2 - 2 s t + t^2 + s t u }{s- t-s^2t } x ) \bigr) ,$$
for some $x, \ y, \ z \in A^{\times }$.
Further, we will compute the form $ \mathcal{Q}_{\psi}$,
where $\psi : (A^2)^{\oplus 2} \ra A$ is the determinant.
By \eqref{eq2211522}, one can check
$$ \mathcal{Q}_{\psi} ( \vec{x} , \vec{x'} ) = 2 (1 + s^2) (1 - s + s^2) (1 + s + s^2) \bar{x} x . $$
Hence, if $ s^2+1 \neq 0$, this $\mathcal{Q}_{\psi}$ is non-degenerate (cf. the boundary condition in Theorem \ref{mainthm14}).

As an example,
consider the case $\F = \mathbb{C} $ and $(s,u)=(1, (1 +\sqrt{-3})/2 )$. In other ward, $f$ is exactly the holonomy representation arising from the hyperbolic structure of $S^3 \setminus K$;
Then, $ \mathcal{Q}_{\psi}$ is expressed as $ 12 \bar{x} x$.

(ii) On the other hand, we consider the remaining case $s+s^{-1} = \pm 1$.
Then, the associated $H_1(\pi_K; \F [t^{\pm 1}] ^2)$ is annihilated by $t \pm 1$.
Hence, let us define the ring $A$ as $ \mathbb{F}[t]/ ( t \pm 1 ) $, and define $M$ as $A^2$ with action.
Then we can see $ {\rm Col}_X(D_f ) = M^2 \cong A^2 \oplus A^2 $ with basis $ (a,b,c,d)\in A^4$.
Moreover, we can read off from \eqref{eq2211522} that
$$ \mathcal{Q}_{\psi} \bigl((a,b,c,d) , (a',b',c',d') \bigr) = \bar{a} a' + \bar{b} b'. $$
\end{exa}
In addition, in spired by \cite{G}, we discuss $ \mathcal{Q}_{\psi} $ associated with adjoint representations. 
However, as seen in Example \ref{extra222}, we should carefully analyze singular points in the space of representations $f: \pi_K \ra G$.
Thus, we shall focus on generic points such as \eqref{coleqeeee}.
\begin{exa}[Adjoint representations]\label{extra2221113}
Let $G$ and $f$ be as above. Consider the lie algebra $ \mathfrak{g}= \{ B \in \mathrm{Mat}_2(\F )\ | \ \mathrm{Tr}B =0\ \} $ with adjoint action of $SL_2 (\F) $,
and set $M=M':= ( \mathfrak{g }[t^{\pm 1}]/ \mathcal{I})^{2} $ for some ideal $ \mathcal{I} \subset \F [t^{\pm 1 }]$.
Put the Killing 2-form $\psi: \mathfrak{g}^2 \ra \F $ which takes $(X,Y )$ to $\mathrm{Tr}(\bar{X}Y) $.
To state only the simplest result \eqref{coleqeeee222}, let us suppose a generic assumption of the form
\begin{equation}\label{coleqeeee} (u-1)(u+u^{- 1}- 1 )(2u+2u^{- 1}- 1 )(2u+2u^{- 1}- 5 )(u^3 -u^2 -2 u -1 ) \neq 0 .
\end{equation}
Then, 
we can easily compute the twisted Alexander polynomial as
$$ \Delta_f = t^2- (2s^2 + 1 +2s^{-2})t+1 =t^2 +( 2u+2u^{-1}-3 )t+1 .$$
Similarly define the ideal $\mathcal{I}$ to be $ (\Delta_f) $.
Then, as a solution of \eqref{eq2211}, we can show
$ {\rm Col}_X^{\rm red}(D_f )\cong A \ $ with a basis $\vec{x}$ in $\mathfrak{g} \otimes A $:
$$
\left(
\begin{array}{cc}
( 1-t)(u s^2 + s^4 + s^6 + t + s^2 t + u s^4 t) & s(1 - 3 t^2 + 4 u t^2 + t^4 ) \\
N_{s,t,u } & ( t-1 )(u s^2 + s^4 + s^6 + t + s^2 t + u s^4 t)
\end{array}
\right),$$
where the left bottom element $ N_{s,t,u}$ is given by the formula
$$ s^2 - 2 s^4 + (4 s^2 -2) t - 4 s^2 t^2 + (2 + 5 s^2 -
6 u^2 s^2) t^3 - 2 t^4 +
u (s^2 t^4 - 4( 1 + 3 s^2 ) t^3+ 5 s^2 t^2+ 4 (1 - s^2) t -s^2 ).$$
Though the basis is complicated,
the anti-hermitian 2-form $ \mathcal{Q}_{\psi}$ in \eqref{eq2211522} can be reduced to
\begin{equation}\label{coleqeeee222} \mathcal{Q}_{\psi}(\vec{x} , \vec{x'} )= 2 (t - t^{-1}) (u+u^{-1}-1 ) (1-u)(u^3 -u^2 -2 u -1 ) x \bar{x}'. \end{equation}
In contract to the previous examples, this $ \mathcal{Q}_{\psi}$ is parameterized by only $u$, not by the trace $s+s^{-1}$ of $f$.
\end{exa}


\subsection{The $(m,m)$-torus link $T_{m,m}$}\label{Lb12r32}
As a example of computation, we will calculate the bilinear form $\mathcal{Q}_{\psi}$ concerning the $(m,m)$-torus link,
following from Definition \ref{deals3}.
These calculations will be useful in the paper \cite{Nos4}, which
suggests invariants of ``Hurewitz equivalence classes".


Let $L$ be the $(m,m)$-torus link $T_{m,m}$ with $m \geq 2$,
and let $\alpha_1, \dots, \alpha_m$ be the arcs depicted in Figure \ref{ftf}.
Furthermore, let us identity $\alpha_{i+m}$ with $\alpha_i$ of period $m$.
By Wirtinger presentation, we have a presentation of $ \pi_L$ as
$$\langle \ a_1, \dots, a_m \ | \ a_1 \cdots a_m= a_m a_1 a_2 \cdots a_{m-1}= a_{m-1}a_m a_1 \cdots a_{m-2}= \cdots = a_2 \cdots a_m a_1 \ \rangle . $$
In particular, we have a projection $ \mathcal{P}: \pi_L \ra F_{m-1 }$ to the free group of rank $m-1$ subject to $ a_1 \cdots a_m=1$.

Given a homomorphism $f:\pi_L \ra G$ with $f(\alpha_i) \in Z $,
let us discuss $X$-colorings $ \mathcal{C}$ over $f$.
Then, concerning the relation on the $\ell $-th link component, it satisfies the equation
\begin{equation}\label{coleq} \bigl( \cdots (\mathcal{C}(\alpha_\ell ) \lhd \mathcal{C}(\alpha_{\ell+1})) \lhd \cdots \bigr)\lhd \mathcal{C}(\alpha_{\ell+m-1}) =
\mathcal{C}( \alpha_\ell) ,\ \ \ \ \ \ \ {\rm for \ any \ }1 \leq \ell \leq m . \end{equation}
With notation $ \mathcal{C} (\alpha_i):= (x_i,z_i) \in X$,
this equation \eqref{coleq} reduces to a system of linear equations
\begin{equation}\label{aac} ( x_{\ell-1} -x_{\ell} ) + \sum_{ \ell \leq j \leq \ell+m-2 }( x_j -x_{j+1} ) \cdot z_{j+1} z_{j+2} \cdots z_{m +\ell } = 0 \in M , \ \ \ \ \ \mathrm{for \ any \ } 1 \leq \ell \leq m .
\end{equation} 
Conversely, we can easily verify that, if a map $\mathcal{C}: \{ \mbox{arcs of $D$} \} \to X$ satisfies the equation \eqref{aac}, then $\mathcal{C} $ is an $ X$-coloring.
Denoting the left side in \eqref{aac} by $ \Gamma_{f,k}(\vec{x})$, consider a homomorphism
$$ \Gamma_{f}: M^m \lra M^m ; \ \ \ (x_1, \dots, x_m) \longmapsto (\Gamma_{f,1}(\vec{x}) , \dots, \Gamma_{f,m}(\vec{x}) ).$$
To conclude, the set $\mathrm{Col}_X(D_f ) $ coincides with the kernel of $\Gamma_{f} $.

Next, we precisely formulate the resulting bilinear form in Definition \ref{deals3}.
\begin{prop}\label{aa11c} 
Let $f : \pi_1(S^3 \setminus T_{m,m})\ra G$ be as above.
Let $\psi: M \otimes M' \ra A $ be a $G$-invariant bilinear function.
For any $ \ell \in \Z_{>0}$ with $ 1 \leq \ell \leq m $, the bilinear form $\mathcal{Q}_{\psi,\ell}: \mathrm{Ker}(\Gamma_{f} )\otimes \mathrm{Ker}(\Gamma_{f} ')\ \ra A$
takes $ (x_1, \dots, x_m) \otimes(y_1', \dots, y_m') $ to 
\begin{equation}\label{bbbdd}
\sum_{k=1}^{m-1 } \psi \bigl( \sum_{j=1}^{k } (x_{j+\ell -1}-x_{j +\ell })\cdot z_{j+\ell } z_{j+\ell +1} \cdots z_{ k+\ell -1} ,\ y_{k+ \ell }' \cdot (1 - z_{k+ \ell }^{-1}) \bigr) \in A . \end{equation}
\end{prop}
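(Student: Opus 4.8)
The plan is to unwind Definition \ref{deals3} directly on the standard diagram of $T_{m,m}$ in Figure \ref{ftf}, reducing everything to tracking how the quandle operation \eqref{kihon} propagates the $M$-component of a colour along the under-strand of a single component.

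First I would isolate the crossings contributing to $\mathcal{Q}_{\psi,\ell}$. By definition these are exactly the crossings whose under-arc lies on the $\ell$-th component, and by the relation \eqref{coleq} that defines the colourings, the under-strand of $K_\ell$ runs successively under the arcs $\alpha_{\ell+1},\alpha_{\ell+2},\dots,\alpha_{\ell+m-1}$ (indices read modulo $m$ via $\alpha_{i+m}=\alpha_i$). Thus there are precisely $m-1$ relevant crossings $\tau_1,\dots,\tau_{m-1}$, and at $\tau_k$ the over-arc is $\alpha_{\ell+k}$, so that in the notation of Definition \ref{deals3} one has $h_{\tau_k}=z_{\ell+k}$, $y_{\tau_k}=x_{\ell+k}$ and $y_{\tau_k}'=y_{\ell+k}'$. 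A quick inspection of the crossing signs in Figure \ref{ftf} shows $\epsilon_{\tau_k}=+1$ for all $k$, which I would record explicitly (consistently with the absence of signs in \eqref{bbbdd}).

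The next step is to determine the under-in colour $x_{\tau_k}$ at each $\tau_k$. Writing $w_k$ for the $M$-part of the under-arc after passing $\tau_k$ and setting $w_0=x_\ell$, the operation \eqref{kihon} gives the recursion $w_k=(w_{k-1}-x_{\ell+k})\cdot z_{\ell+k}+x_{\ell+k}$, and the under-in colour at $\tau_k$ is $w_{k-1}$. Hence the $k$-th summand of $\mathcal{Q}_{\psi,\ell}$ equals $\psi\bigl(w_{k-1}-x_{\ell+k},\,y_{\ell+k}'\cdot(1-z_{\ell+k}^{-1})\bigr)$. It then remains to solve the recursion: I would prove by induction on $k$ the closed formula $w_{k-1}-x_{\ell+k}=\sum_{j=1}^{k}(x_{\ell+j-1}-x_{\ell+j})\cdot z_{\ell+j}z_{\ell+j+1}\cdots z_{\ell+k-1}$, the base case $k=1$ being $w_0-x_{\ell+1}=x_\ell-x_{\ell+1}$, and the inductive step following by multiplying the hypothesis on the right by $z_{\ell+k}$ (using that $M$ is a right $G$-module) and then adjoining the $j=k+1$ term $x_{\ell+k}-x_{\ell+k+1}$, whose product factor is empty. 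Substituting this closed form into the previous display and summing over $k=1,\dots,m-1$ yields exactly \eqref{bbbdd}.

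The computation is algebraically routine once the geometry is pinned down; the only genuine obstacle is the bookkeeping, namely reading off from the diagram — correctly and uniformly in $\ell$ — which arc is the under-in strand, which is the over-arc, and the sign at each of the $m-1$ crossings, all modulo the periodic identification $\alpha_{i+m}=\alpha_i$. I would handle this by fixing the traversal orientation of $K_\ell$ used in \eqref{coleq} and checking the labelling against Figure \ref{ftf} in the small cases $m=2,3$ before asserting the general pattern.
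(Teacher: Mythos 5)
Your proposal is correct and is essentially the argument the paper intends: the paper gives no detailed proof, remarking only that ``the formulae are obtained by direct calculation and definitions,'' and your crossing-by-crossing unwinding of Definition \ref{deals3} along the under-strand of $K_\ell$, with the recursion from \eqref{kihon} and the inductive closed form for $w_{k-1}-x_{\ell+k}$, is exactly that calculation. As a confirmation of your sign bookkeeping, your result agrees with the paper's general formula \eqref{deqqg} in \S\ref{yy4324} specialized to all $\epsilon_j=+1$ (as the paper itself notes there).
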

\noindent
The formulae are obtained by direct calculation and definitions.

Finally, under an assumption, we give a 2-dimensional interpretation of the bilinear form.
\begin{prop}\label{aa1133c}
Let $W $ be the complementary space of $S^2$ obtained by removing $m$ disks.
Consider the action of $\pi_1(W ) $ on $M$ induced from that of $\pi_L$ via the above projection $\mathcal{P}: \pi_L \ra F_{m-1}$.
With the notation above,
we assume that $ z_1 \cdots z_m$ are identities in $M$ and $M'$.

Then the diagonal map $ M \ra \Ker ( \Gamma_{\mathbf{z}}) $ is a splitting injection, and
the cokernel is the relative cohomology $H^1(W, \partial W ; M )$.
Further, the bilinear form $\mathcal{Q}_{\psi,\ell} $ coincides with the composite:
$$ H^1( W,\partial W; M ) \otimes H^1( W,\partial W; M ') \xrightarrow{\ \ \smile \ \ }
H^2( W ,\partial W ; M\otimes M' ) \xrightarrow{ \ \langle \psi, \bullet \rangle \circ \langle \bullet , \mu_\ell \rangle \ }A . $$

Furthermore, elements of the image $\mathrm{Im}(\delta^* ) $ and $M_{\rm tri} $ explained in Corollary \ref{mainthm1} are represented by
$ (x_1, \dots, x_m)$ and $ (x, \dots, x) \in M^n$, respectively. Here $x\in M$ and $x_i \in M(1-z_i)$.
\end{prop}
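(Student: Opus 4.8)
The plan is to recognise the complement of $T_{m,m}$ as a product and to deduce everything from the K\"unneth formula together with Theorem~\ref{mainthm2}. First I would recall that $T_{m,m}$ is the union of $m$ fibres of the Hopf fibration $S^3\to S^2$ (the link of the singularity $z^m=w^m$), so that $Y_L=S^3\setminus\nu T_{m,m}$ is the $S^1$-bundle over the $m$-holed sphere $W$ obtained by deleting the $m$ regular fibres. Since $W$ has non-empty boundary the bundle is trivial, giving a homeomorphism $Y_L\cong W\times S^1$ with $\partial Y_L\cong \partial W\times S^1$. Under the Wirtinger presentation the central element $c=\alpha_1\cdots\alpha_m$ is exactly the regular-fibre class, and the projection $\mathcal{P}:\pi_L\to F_{m-1}=\pi_1(W)$ is the quotient by $\langle c\rangle$. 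Hence the hypothesis that $z_1\cdots z_m=f(c)$ acts as the identity on $M$ and $M'$ is precisely the statement that these local systems are pulled back from $W$.

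Granting this, I would compute $H^*(Y_L,\partial Y_L;M)$ by the K\"unneth formula for the pair $(W,\partial W)\times S^1$, where the $S^1$-factor carries trivial coefficients. Because $H^0(W,\partial W;M)=0$ (as $W$ is connected with non-empty boundary) and $H^0(S^1;\Z)\cong H^1(S^1;\Z)\cong\Z$, the formula collapses to an isomorphism $H^1(Y_L,\partial Y_L;M)\cong H^1(W,\partial W;M)$. Combined with the isomorphism $\mathrm{Col}^{\rm red}_X(D_f)\cong H^1(Y_L,\partial Y_L;M)$ of Theorem~\ref{mainthm2} and the decomposition \eqref{skew24592}, this yields the first two assertions: the diagonal $M\to\Ker(\Gamma_{\mathbf z})$ is the splitting summand $M_{\rm diag}$ (one checks directly that $(x,\dots,x)$ kills every equation \eqref{aac}, since all differences $x_j-x_{j+1}$ vanish), and the complementary summand $\Coker$ is $H^1(W,\partial W;M)$.

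For the bilinear form I would push the cup-product composite of Theorem~\ref{mainthm2} through the product decomposition. Two classes coming from the $H^1(W,\partial W;M)\otimes H^0(S^1)$ summand cup to a class in $H^2(W,\partial W;M\otimes M')\otimes H^0(S^1)$, and pairing with the longitude $2$-class $\mu_\ell$---which under $Y_L\cong W\times S^1$ is represented by the vertical annulus $\gamma_\ell\times S^1$ over a relative $1$-chain $\gamma_\ell$ in $W$---reduces, after integrating over the $S^1$-factor, to evaluation against the corresponding class $\mu_\ell\in H_2(W,\partial W)$. This identifies $\mathcal{Q}_{\psi,\ell}$ with the two-dimensional composite in the statement. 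Finally, the descriptions of $\mathrm{Im}(\delta^*)$ and $M_{\rm tri}$ follow by tracing the connecting map $\delta^*:H^0(\partial W;M)\to H^1(W,\partial W;M)$ and the diagonal through these identifications: a boundary class over the $\ell$-th component is represented by a tuple with $x_i\in M(1-z_i)$, while $M_{\rm tri}$ is the image of the constant tuples $(x,\dots,x)$, as in Corollary~\ref{mainthm1}.

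The step I expect to be the main obstacle is the last part of the third paragraph: matching the link-theoretic longitude $2$-class $\mu_\ell$ with the correct vertical class $\gamma_\ell\times[S^1]$ in $W\times S^1$, and checking that the K\"unneth-reduced cup product reproduces exactly the diagrammatic sum \eqref{bbbdd} of Proposition~\ref{aa11c} with the right signs and orientations. An alternative, more computational route that avoids the fibration is to mimic the proof of Lemma~\ref{busan}: specialise the commutative diagram there to $z_1\cdots z_m=1$ and identify $\Gamma_{\mathbf z}$, up to the change of basis $\prod_\tau\kappa_\tau$, with the relative Fox-derivative complex of the one-relator presentation $\langle a_1,\dots,a_m \mid a_1\cdots a_m\rangle$ of $\pi_1(W)$.
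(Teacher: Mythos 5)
Your first two paragraphs are in substance the paper's own argument: the paper proves this group-cohomologically, noting $\pi_L\cong\Z\times F_{m-1}$ with the $\Z$-factor generated by $a_1\cdots a_m$, and uses the hypothesis $z_1\cdots z_m=\mathrm{id}$ together with Lemma \ref{clAl1} to conclude that the projection $\mathcal{P}$ induces an isomorphism $\mathcal{P}^*\colon H^1(W,\partial W;M)\ra H^1(\pi_L,\partial\pi_L;M)$, after which everything is quoted from Theorem \ref{mainthm2}. Your homeomorphism $Y_L\cong W\times S^1$ plus the relative K\"unneth formula is the topological incarnation of that same splitting, and it correctly yields the splitting injection $M\ra\Ker(\Gamma_{\mathbf z})$ and the identification of the cokernel with $H^1(W,\partial W;M)$.

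The genuine gap is exactly the step you flagged, and your proposed representative is wrong as stated: $\mu_\ell$ is \emph{not} represented by a vertical annulus $\gamma_\ell\times S^1$. Such a class lies in the $H_1(W,\partial W)\otimes H_1(S^1)$ K\"unneth summand of $H_2(Y_L,\partial Y_L)$, whereas your cup product lies in the $H^2(W,\partial W;M\otimes M')\otimes H^0(S^1)$ summand; these pair to zero, so with your representative $\mathcal{Q}_{\psi,\ell}$ would vanish identically, contradicting the explicit formula \eqref{bbbdd} and the nontrivial computations intended for \cite{Nos4}. The correct statement is the opposite: intersecting with a regular fibre $\{\mathrm{pt}\}\times S^1$, which has linking number one with every component of $T_{m,m}$, shows that $\mu_\ell$ equals the horizontal class $[W,\partial W]\times\{\mathrm{pt}\}$ plus vertical terms, and it is precisely the horizontal component that survives the pairing --- no ``integration over the $S^1$-factor'' of a vertical cycle is involved. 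The paper sidesteps this bookkeeping entirely by staying in group cohomology, where Theorem \ref{mainthm2} (via the explicit $2$-cycle $\hat{\mu}_\ell$ and Lemma \ref{2g2s}) already expresses $\mathcal{Q}_{\psi,\ell}$ as a pairing that transports through $\mathcal{P}^*$ by naturality. Finally, note that your fallback route via Lemma \ref{busan} would only re-derive the kernel/cokernel identification (and that lemma concerns $\Coker$ and $H_1$, not $\Ker$ and relative $H^1$), so it does not repair the pairing step.
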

We will give the proof in the end of \S \ref{yy4324}. 
Furthermore, similar to the previous section,
for a concrete representation $ \pi_L \ra G$, we can explicitly compute the bilinear forms.
We refer the reader to \cite{Nos4} for concrete computation from Proposition \ref{aa1133c}.

\section{Proofs of Theorems \ref{mainthm2}}\label{Lbo33222}
We will work out the respective proofs of
Theorem \ref{mainthm2} in \S \ref{yy43} and in \S \ref{yy4324}.
While the statements were described in terms of ordinary cohomology,
the proof will be done via the group cohomology;
In \S \ref{KI11}, we review the relative group (co)homology.
\subsection{Preliminary; Review of relative group homology}\label{KI11}
The relative group homology is a useful method, e.g, for algebraic $K$-theory, secondary characteristic classes and stability problems of group homologies.
As in \cite{BE,T,Zic}, the
relative homology is defined from a projective resolution.
However, we will spell out the relative group (co)homology in {\it non-}homogeneous terms, as follows.

This subsection reviews the definition and properties.
Throughout this subsection, we fix a group $ \Gamma $ and a homomorphism $f : \Gamma \ra G$.
Then, $\Gamma$ acts on the right $G$-module $M$ via $f$.
Let $ C_{n}^{\rm gr }(\Gamma;M ) $ be $M \otimes_{\Z} \Z[\Gamma^n] $.
Define the boundary map $\partial_n( a \otimes (g_1, \dots , g_n)) \in C_{n-1}^{\mathrm{gr}}(\Gamma;M)$ by the formula
\[ a \otimes ( g_2, \dots ,g_{n}) +\!\!\sum_{1 \leq i \leq n-1}\!\! (-1)^i a \otimes ( g_1, \dots ,g_{i-1}, g_{i} g_{i+1}, g_{i+2},\dots , g_n)+(-1)^{n} (a g_n) \otimes ( g_1, \dots , g_{n-1}) .\]
Moreover, we set subgroups $ K_j $ and the inclusions $\iota_j: K_j \hookrightarrow \Gamma $,
where the index $ j$ runs over $ 1 \leq j \leq m $ (possibly, $K_s = K_t $ even if $s \neq t$). 
Then, we can define the complex of 
the mapping cone of $ \iota_j$'s:
More precisely, let us set up the module defined to be
$$ C_n( \Gamma, K_\mathcal{J} ; M ):= C_{n}^{\mathrm{gr}}(\Gamma;M) \oplus \bigl( \bigoplus_{j\in \mathcal{J} } C_{n-1}^{\mathrm{gr}}(K_j ;M) \bigr) $$
and define the differential map on $C_*( \Gamma, K_\mathcal{J}; M ) $ by the formula
$$ \partial_n^{\rm rel}( a, b_1, \dots, b_m):= \bigl(\sum_{j \in \mathcal{J} } \iota_j(b_j) -\partial_n(a), \partial_{n-1}^{ }(b_1), \dots, \partial_{n-1}^{ }(b_m)\bigr) \in C_{n-1} ( \Gamma, K_\mathcal{J} ; M ). $$
Since the square is zero, we can define the relative group homology $H_n( \Gamma, K_{\mathcal{J}}; M ) $.
\begin{rem}\label{clAl221} It is shown \cite[Propositions in \S 1]{T} that, for any $g \in \Gamma$,
the relative homology $ H_*( \Gamma, K_\mathcal{J}; M ) $
is invariant with respect to the change from all the subgroups $K_j$ to $g^{-1} K_j g$.
\end{rem}

Dually, we will discuss the relative cohomology. 
Let us set the cochain group of the form
$$C^n( \Gamma, K_\mathcal{J}; M ):= \mathrm{Map} ( \Gamma^n , M ) \oplus \bigl( \bigoplus_{j } \mathrm{Map }((K_{j})^{n-1} , M ) \bigr) . $$
Furthermore, for $(h,k_1, \dots, k_m ) \in C^n( \Gamma, K_\mathcal{J}; M ) $, let us define
$ \partial^n(h,k_1, \dots, k_m )$ in $ C^{n+1}( \Gamma, K_\mathcal{J}; M )$ by the formula
$$ \partial^n \bigl(h,k_1, \dots, k_m \bigr)( a, b_1, \dots, b_m)= \bigl( h( \partial_{n+1} (a)), \ h (b_1) -k_1(\partial_n(b_1)), \dots,h(b_m) -k_m (\partial_n(b_m))\bigr),$$
where $( a, b_1, \dots, b_m) \in \Gamma^{n+1} \times( K_1)^{n} \times \cdots \times (K_m)^{n} $. Then, we have a complex $ (C^*( \Gamma, K_\mathcal{J}; M ), \partial^*)$, and
can define the cohomology.

As the simplest example, we now observe
the submodule consisting of 1-cocycles.
Let $ \Hom_f (\Gamma , M \rtimes G )$ be the set of group homomorphisms $\Gamma \ra M \rtimes G $ over the homomorphism $f$.
Here the semi-product $M \rtimes G $ is defined by
$$ (a, g) \star (a',g'):=( a \cdot g' + a', \ gg'), \ \ \ \ \mathrm{for} \ \ a,a' \in M, \ \ \ g,g' \in G. $$
Then, as is well-known (see \cite[\S IV. 2]{Bro}), if $K_\mathcal{J} $ is the empty set, the set $ \Hom_f (\Gamma , M \rtimes G )$ 
is identified with the set of group 1-cocycles of $\Gamma $ as follows:
$$ Z^1( \Gamma; M ) \cong \Hom_f (\Gamma , M \rtimes G ); \ \ \ \ \ \ h \longmapsto (\gamma \mapsto ( h(\gamma), f(\gamma))) .$$
Further, concerning the relative cohomology, from the definition, we can easily characterize the first cohomology as follows:
\begin{lem}\label{clAl1}
The submodule of 1-cocycles, $Z^1( \Gamma, K_\mathcal{J}; M )$, is identified with the following:
$$ \{ \ (\widetilde{f} , y_1, \dots, y_m ) \in \Hom_f ( \Gamma, M \rtimes G) \oplus M^m \ | \ \ \widetilde{f} (h_j) = ( y_j - y_j \cdot h_j, \ f_j( h_j) ) , \ \ \mathrm{for \ any } \ h_j \in K_j. \ \} $$

Moreover, the image of $\partial^1$, i.e., $B^1( \Gamma, K_\mathcal{J}; M )$, is equal
to the subset $\{ ( \tilde{f}_a, a, \dots, a)\}_{a \in M}. $
Here, for $a \in M,$ this
$ \tilde{f}_a : \Gamma \ra M \rtimes G $ is defined as a map which sends $\gamma $ to $( a - a \cdot \gamma, \ f (\gamma ))$.
In particular, if $ K_\mathcal{J} $ is not empty, $B^1( \Gamma, K_\mathcal{J}; M )$ is a direct summand of $ Z^1( \Gamma, K_\mathcal{J}; M )$.
\end{lem}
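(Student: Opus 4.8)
The plan is to unravel the mapping-cone differential in degrees $0$, $1$, $2$ and read off $Z^1$ and $B^1$ directly. First I would record the shape of the relative cochain groups near degree $1$: since the summand $\mathrm{Map}((K_j)^{n-1},M)$ sits in degree $n$, it contributes nothing when $n=0$, so that $C^0(\Gamma,K_{\mathcal{J}};M)\cong\mathrm{Map}(\Gamma^0,M)\cong M$, while $C^1(\Gamma,K_{\mathcal{J}};M)\cong\mathrm{Map}(\Gamma,M)\oplus M^m$ (each $\mathrm{Map}((K_j)^0,M)$ being a copy of $M$, recorded as $y_j$), and only $C^2$ acquires the honest summands $\bigoplus_j\mathrm{Map}(K_j,M)$. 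Using the identification $Z^1(\Gamma;M)\cong\Hom_f(\Gamma,M\rtimes G)$ displayed just above, a pair $(h,y_1,\dots,y_m)$ is then rewritten as $(\widetilde{f},y_1,\dots,y_m)$ with $\widetilde{f}(\gamma)=(h(\gamma),f(\gamma))$.

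Next I would compute $\partial^1(h,y_1,\dots,y_m)$ from the cone formula. Its first (absolute) component is the composite $h\circ\partial_2$, i.e. the ordinary non-homogeneous coboundary sending $(g_1,g_2)$ to $h(g_2)-h(g_1g_2)+h(g_1)\cdot g_2$; its vanishing is precisely the crossed-homomorphism condition, equivalently the assertion that $\widetilde{f}$ is a homomorphism over $f$. The $j$-th relative component evaluates, for $h_j\in K_j$, to $h(h_j)-k_j(\partial_1 h_j)$, where the $0$-cochain $k_j=y_j$ has cone-coboundary the principal crossed homomorphism $h_j\mapsto y_j-y_j\cdot h_j$ over $K_j$ (the action being through $f_j=f|_{K_j}$). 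Setting this to zero gives $h(h_j)=y_j-y_j\cdot h_j$, which is exactly $\widetilde{f}(h_j)=(y_j-y_j\cdot h_j,\,f_j(h_j))$. This pins down $Z^1(\Gamma,K_{\mathcal{J}};M)$ as claimed.

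For the coboundaries I would compute $\partial^0$ on $a\in C^0\cong M$: its absolute component is the degree-$0$ coboundary $\gamma\mapsto a-a\cdot\gamma$, i.e. $\widetilde{f}_a$, while each relative component returns the constant $a$ (there being no lower $K_j$-cochain to subtract). Hence $\partial^0(a)=(\widetilde{f}_a,a,\dots,a)$, so $B^1(\Gamma,K_{\mathcal{J}};M)=\{(\widetilde{f}_a,a,\dots,a)\}_{a\in M}$, and a quick check confirms $\widetilde{f}_a$ is indeed a crossed homomorphism satisfying the relative constraint with $y_j=a$, so $B^1\subseteq Z^1$.

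Finally, to see that $B^1$ is a direct summand of $Z^1$ when $K_{\mathcal{J}}\neq\emptyset$, I would exhibit a retraction. Fixing any index, say $j=1$, define $\pi\colon Z^1\to B^1$ by $(\widetilde{f},y_1,\dots,y_m)\mapsto(\widetilde{f}_{y_1},y_1,\dots,y_1)$. This is $\Z$-linear, lands in $B^1$ by construction, and restricts to the identity on $B^1$ (there $y_1=a$); thus $\pi^2=\pi$ and $Z^1=B^1\oplus\Ker\pi$. I expect the only delicate point of the argument to be bookkeeping the cone differential's degree shift in the $K_j$-direction — in particular checking that the relevant coboundary is $y_j-y_j\cdot h_j$ with the correct sign and right-action, rather than its negative or $y_j\cdot h_j-y_j$ — since this is where the semidirect-product normalization in $Z^1(\Gamma;M)\cong\Hom_f(\Gamma,M\rtimes G)$ must be matched consistently.
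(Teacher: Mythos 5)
Your proposal is correct and follows exactly the route the paper intends: the paper states this lemma as an immediate consequence of the definitions ("from the definition, we can easily characterize..."), and your degree-by-degree unraveling of the cone differential — identifying $C^1\cong\mathrm{Map}(\Gamma,M)\oplus M^m$, reading off the crossed-homomorphism condition from $h\circ\partial_2$, the relative condition $h(h_j)=y_j-y_j\cdot h_j$ from $h(b_j)-y_j(\partial_1 b_j)$, and $\partial^0(a)=(\widetilde{f}_a,a,\dots,a)$ — is precisely that verification, with the signs and the right-action normalization matching the paper's conventions. Your explicit retraction $(\widetilde{f},y_1,\dots,y_m)\mapsto(\widetilde{f}_{y_1},y_1,\dots,y_1)$ cleanly establishes the direct-summand claim that the paper likewise leaves to the reader.
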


Finally, we will formulate explicitly the cup product on $ C^p( \Gamma, K_\mathcal{J} ; M )$ and the Kronecker product.
When $ K_\mathcal{J} $ is the empty set, we define the product of $u \in C^p( \Gamma; M )$ and $v \in C^{q}( \Gamma; M' )$ to be 
the element $u \smile v \in C^{p+q} ( \Gamma; M \otimes M')$ given by
$$ ( u \smile v) ( g_1 ,\dots, g_{p+q}):= (-1)^{pq} \bigl( u (g_1 ,\dots, g_{p} ) g_{p+1} \cdots g_{p+q} \bigr) \otimes v (g_{p+1} ,\dots, g_{p+q} ) .$$
Further, if $ K_\mathcal{J} $ is not empty, for two elements $(f,k_1, \dots, k_m )\in C^p( \Gamma, K_\mathcal{J} ; M )$ and
$(f',k'_1, \dots, k'_m )\in C^{q}( \Gamma, K_\mathcal{J} ; M' )$, let us define {\it the cup product} to be the formula
$$ ( f \smile f', \ k_1\smile f', \dots, \ k_m\smile f') \in C^{p+q}( \Gamma, K_\mathcal{J} ; M \otimes M'). $$
We can easily see that this formula descends to a bilinear map, by passage to cohomology,
$$ \smile : H^p( \Gamma, K_\mathcal{J} ; M ) \otimes H^{q}( \Gamma, K_\mathcal{J} ; M' )\lra H^{p+q}( \Gamma, K_\mathcal{J} ; M \otimes M').$$
Then the graded commutativity holds: for any $u \in H^{p} ( \Gamma, K_\mathcal{J} ; M )$ and $v \in H^{q}( \Gamma, K_\mathcal{J} ; M' )$,
we have $u \smile v =(-1)^{pq } \tau (v \smile u) $,
where $ \tau: M \otimes M' \ra M' \otimes M$ is the canonical isomorphism.
Furthermore, for $( a, b_1, \dots, b_m) \in \Gamma^{n} \times K_1^{n-1} \times \cdots \times K_m^{n-1} $,
consider the evaluation defined by
$$ \langle (f,k_1, \dots, k_m ) , ( a, b_1, \dots, b_m) \rangle:= f(a)+k_1(b_1)+ \cdots +k_m(b_m)\in M.$$
Then it can be seen that the formula induces $ \langle , \rangle : H^n( \Gamma, K_\mathcal{J} ; M )\otimes H_n( \Gamma, K_\mathcal{J} ; A )\ra H_0(\Gamma ; M) . $
Here we can replace $ H_0(\Gamma ; M) $ by the coinvariant $M_\Gamma= M/ \{ ( a -a \cdot g)\} _{a \in M, g \in \Gamma}$.
\begin{rem}\label{clAl221}
We will give a topological description of the above definitions without proofs (For the proof see \cite{BE} or \cite[\S 3]{Zic}).
Consider the Eilenberg-MacLane spaces of $ \Gamma$ and of $K_j$, and
the map $ (\iota_j)_* : K(K_j,1 ) \ra K( \Gamma,1 ) $ induced by the inclusions.
Then the relative homology $H_n( \Gamma, K_\mathcal{J} ; M ) $ is isomorphic to the homology of the mapping cone of $ \sqcup_j K(K_j,1 )\ra K(G,1 )$
with local coefficients.
Further, the cup product $\smile$ and the Kronecker product $ \langle , \rangle$ above coincide with those on
the usual singular (co)homology groups (up to signs \footnote{See \cite[\S\S 1-2 ]{BE} for details.}).
In particular, we mention the knot case $\# L =1$.
Since the complementary space $Y_L = S^3 \setminus L$ is an Eilenberg-MacLane space, we
have an isomorphism $ H^*( \pi_1 (Y_L ), \pi_1( \partial Y_L) ;M) \cong H^*( Y_L, \partial Y_L ; M) $.

More generally, we comment on the case $ \# L \geq 1$.
We let $ \Gamma$ be $\pi_1(Y_L)$ and let $ K_\ell ( \cong \Z^2 ) $ be the abelian subgroup of $\pi_1(Y_L) $ arising from
the $\ell $-th boundary. 
Denote the family $\mathcal{K}:=\{K_\ell \}_{\ell \leq \# L}$ by $\partial \pi_1 (Y_L)$,
and consider the inclusion pair
$$\iota_Y : \bigl( Y_L ,\ \partial Y_L \bigr) \ra \bigl( K(\pi_1 (Y_L),1 ) , \ K(\partial \pi_1 (Y_L),1 ) \bigr) $$
obtained by attaching cells to kill the higher homotopy group.
Then, we have a commutative diagram:
$${\normalsize
\xymatrix{
H^1( \pi_1 (Y_L), \partial \pi_1(Y_L) ; M)^{\otimes n} \ar[r]^{\smile}\ar[d]_{\cong }^{\iota_Y^*} & H^n( \pi_1 (Y_L), \partial \pi_1(Y_L) ; M^n ) \ar[rr]^{\ \ \ \ \ \ \ \ \ \ \langle \bullet, (\iota_Y )_*(\mu) \rangle }
\ar[d]^{\iota_Y^* }& & \ \ (M^{\otimes n})_{\pi_1(Y_L)} \ar@{=}[d] \ \ & \\
H^1( Y_L ,\partial Y_L; M)^{\otimes n} \ar[r]^{\smile} & H^n( Y_L ,\partial Y_L; M^n) \ar[rr]^{\ \ \ \ \ \ \ \ (-1)^n\langle \bullet, \mu \rangle } & & \ \ (M^{\otimes n})_{\pi_1(Y_L)}.
}}
$$
Here, the left $\iota_Y^*$ is an isomorphism from the definition of $ \iota_Y $.
In conclusion, as a result of the diagram, to prove Theorem \ref{mainthm2} on the bottom arrows,
we may focus on only the group (co)homologies on the upper one such as the next subsection, in what follows.
\end{rem}


\subsection{Proof for the isomorphism \eqref{g21gg33}.}\label{yy43}
This subsection gives the proof of the isomorphism \eqref{g21gg33} in Theorem \ref{mainthm2}, and Corollary \ref{mainthm1}.
For this, 
we now prepare terminology throughout this section:
Let $D$ be a diagram of a link $L$ and let $\Gamma$ be $ \pi_L$. 
In addition,
we fix an arc $\gamma_{\ell}$
from each link component $\ell$ of $L$, and
consider the circular path $\mathcal{P}_{\ell}$ starting from $\gamma_{\ell} $ (see Figure \ref{ezu}).
Further, for $j\geq 2$, we denote by
$\alpha_{\ell, j}$ the $j$-th arc on $\mathcal{P}_{\ell} $,
and do by $\beta_{\ell, j}$ the arc
that divides the arcs $\alpha_{\ell, j-1 } $ and $\alpha_{\ell, j}$. 
Considering the meridian $\mathfrak{m}_{\ell,j}^{ \epsilon_j} \in \pi_1( S^3 \setminus L )$ associated with the arc $ \beta_{\ell, j} $,
we here define the longitude $ \mathfrak{l}_{\ell}$ to be
\begin{equation}\label{189}
\mathfrak{l}_{\ell} := \mathfrak{m}_{\ell,1}^{ \epsilon_1} \mathfrak{m}_{\ell,2}^{ \epsilon_2} \cdots \mathfrak{m}_{\ell, N_\ell }^{ \epsilon_{N_\ell}} \in \pi_1(S^3 \setminus L),
\end{equation}
where 
$\epsilon_i \in \{ \pm 1\} $ is the sign of the crossing between $ \alpha_{\ell,i } $ and $\beta_{\ell,i }$ with $i>1,$
and $ \epsilon_1 =+1$.
Considering the subgroup $\partial_{\ell} \pi_L \cong \Z^2 $ generated by the meridian-longitude pair $ (\mathfrak{m}_{\ell} , \mathfrak{l}_{\ell} )$,
the union $ \partial_{1} \pi_L \sqcup \cdots \sqcup \partial_{\# L} \pi_L$ coincides with the family $ \partial \pi_L $ mentioned in Remark \ref{clAl221}.

\begin{figure}[htpb]
\begin{center}
\begin{picture}(50,74)
\put(-104,40){\large $\beta_{\ell, 1 } $}
\put(187,51){\Large $\mathcal{P}_\ell $}
\put(-132,-1){ \includegraphics[clip,width=11.0cm,height=2.6cm]{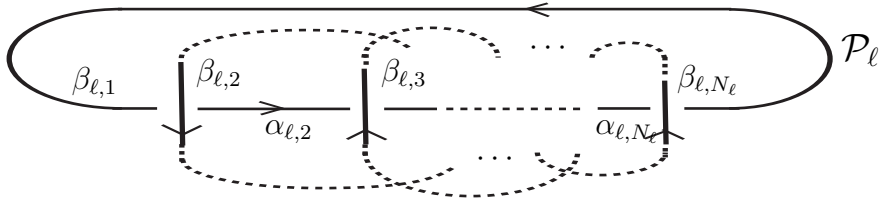}}

\put(-31,23){\large $\alpha_{\ell, 2 }$}
\put(94,23){\large $\alpha_{\ell, N_{\ell}} $}
\put(49,11){\large $\cdots $}

\put(-57,43){\large $\beta_{\ell, 2 } $}
\put(13,43){\large $\beta_{\ell, 3} $}
\put(125,41){\large $\beta_{\ell, N_{\ell}} $}
\put(69,53){\large $\cdots $}
\end{picture}
\end{center}
\vskip -0.95pc
\caption{\label{ezu} The longitude $\ell_j$ and arcs $\alpha$'s and $\beta$'s in the diagram $D$. Here $\gamma_{\ell}= \alpha_{\ell ,1 }= \beta_{\ell, 1 }. $}
\end{figure}

\begin{proof}[Proof of the isomorphisms \eqref{g21gg33}]
First, we will construct a map in \eqref{11222}.
Given a $G$-module $M$,
set up a map
\begin{equation}\label{g21gg} \kappa : M \times G \lra M \rtimes G; \ \ \ \ (m,g) \longmapsto (m \cdot g - m, \ g ). \end{equation}
Further, for an $X$-coloring $\mathcal{C}$ over $f$,
consider a map 
$ \tilde{f}_{\mathcal{C}} : \{ {\rm arcs \ of \ }D \} \ra M \rtimes G $
which takes $\gamma$ to $\kappa \bigl( \mathcal{C} (\gamma) \bigr) $.
Then, we verify from Wirtinger presentation that this $ \tilde{f}_{\mathcal{C}}$ defines a group homomorphism $\pi_L \ra M \rtimes G$ over $f$.
Hence, 
we obtain a map
\begin{equation}\label{11222} \Omega: \mathrm{Col}_X(D_{f}) \lra \Hom (\pi_L , M \rtimes G) \times (X^{\# L}) ; \ \ \ \mathcal{C} \longmapsto ( \tilde{f}_{\mathcal{C}}, \ \mathcal{C}( \gamma_1), \dots, \mathcal{C}( \gamma_{\# L} ) ). \end{equation}




We claim tha such an $ \tilde{f}_{\mathcal{C}}$ uniquely admits
$a_{\ell} \in M$
satisfying the two identities
\begin{equation}\label{1v24}
\ \widetilde{f}(\mathfrak{m}_{\ell } ) =(a_{\ell} - a_{\ell} \cdot f(\mathfrak{m}_{\ell } ), \ f(\mathfrak{m}_{\ell } ))
\ \ \ \ \ \ \ \ 
\widetilde{f}(\mathfrak{l}_{\ell } ) =(a_{\ell} - a_{\ell} \cdot f(\mathfrak{l}_{\ell } ), \ f(\mathfrak{l}_{\ell } )) \in M \rtimes G.
\end{equation}
with respect to $1 \leq \ell \leq \# L.$ Fix notation $\mathcal{C}(\beta_{\ell,j})=(y_j, z_j)\in M \times G $.
The first one is obvious from the definition of $\kappa$ with $a_{\ell}=y_1$.
We shall show the second:
by the coloring condition between $\alpha_{\ell. N_{\ell}}$ and $\beta_{\ell. N_{\ell}}$, we have two equations;
\begin{equation}\label{deq221}
z_1 z_2^{\epsilon_2} \cdots z_{N_{\ell}}^{\epsilon_{N_{\ell}}} =z_2^{\epsilon_2} \cdots z_{N_{\ell}}^{\epsilon_{N_{\ell}}} z_1 , \ \ \ \ \
y_1 z_2 ^{\epsilon_{2}} \cdots z_{N_{\ell}} ^{\epsilon_{N_{\ell}}} +
\sum_{k=2}^{N_{\ell}-1 } y_k (1-z_{k+1}^{\epsilon_{k+1}} )\cdot z_{k+2 }^{\epsilon_{k+2}} z_{k+3 }^{\epsilon_{k+3}} \cdots z_{N_{\ell}}^{\epsilon_{N_{\ell}}} = y_1 .
\end{equation}
By \eqref{g21gg} and \eqref{189} that $\widetilde{f}(\mathfrak{l}_{\ell } ) $ is expressed as
$\sum_{k=1}^{N_{\ell} } y_k (1-z_{k+1}^{\epsilon_{k+1}} )\cdot z_{k+2 }^{\epsilon_{k+2}} z_{k+3 }^{\epsilon_{k+3}} \cdots z_{N_{\ell}}^{\epsilon_{N_{\ell}}} $.
Hence, comparing carefully it with \eqref{deq221} gives the second one in \eqref{1v24}

Since \eqref{1v24} coincides with exactly the 1-cocycle condition by Lemma \ref{clAl1},
the map $\Omega$ is reduced to $\mathrm{Col}_X(D_{f}) \ra Z^1( \pi_L, \partial \pi_L;M)$.
We will construct the inverse mapping as follows.
For this, notice the equality
$$ \kappa (a \cdot h +b, h^{-1}gh) = (b,h)^{-1} \cdot \kappa (a,g) \cdot (b,h) \in M \rtimes G $$
from the definitions, and notice that any meridian $ \mathfrak{m}_{\ell, j} $ in $\pi_L $ is conjugate to the $ \mathfrak{m}_{\ell_j }$ on the $\ell_j$-th component for some $\ell_j$:
In other ward, we can choose $ h_j \in \pi_L $ with $\mathfrak{m}_{\ell, j} = h_j^{-1}\mathfrak{m}_{\ell_j} h_j$.
To summarize, given an $\tilde{f}$ in $ Z^1( \pi_L, \partial \pi_L;M)$,
we define a map $\mathcal{C}_{\tilde{f}} : \{ \mathrm{arc \ of \ }D \} \ra X$ by $ \mathcal{C}_{\tilde{f}} (\mathfrak{m}_{\ell, j} ) = ( a_{\ell} \cdot h_j +b_j , f (\mathfrak{m}_{\ell, j} ) )$
where $b_j \in M$ is defined from $\tilde{f}( \mathfrak{m}_{\ell, j} )=(b_j, f (\mathfrak{m}_{\ell, j} ))$.
Then, we can easily see that $ \mathcal{C}_{\tilde{f}}$ is an $X$-coloring, and this construction gives the desired inverse mapping.

To summarize,
we have the isomorphism $\mathrm{Col}_X(D_{f}) \lra Z^1( \pi_L, \partial \pi_L;M)= H^1( \pi_L, \partial \pi_L;M) \oplus M$.
Furthermore, Lemma \ref{clAl1} again says that the summand $ X_{\rm diag} \cap \mathrm{Col}_X(D_{f}) \cong M$ is exactly $B^1(\pi_L, \partial \pi_L;M )$.
Hence, by the definition of $\mathrm{Col}_{X }^{\rm red} (D_{f}) $ in \eqref{skew24592}, the map $\Omega$ ensures the desired $\mathrm{Col}_{X }^{\rm red} (D_{f}) \cong H^1(Y_L , \ \partial Y_L ;M )$.
\end{proof}

\begin{proof}[Proof of Corollary \ref{mainthm1}]
We will show the required isomorphism $ H^1(\pi_L , \partial \pi_L ; M) \cong H^1(\pi_L ; M)$ from
the bijective assumption of $\mathrm{id} - f (\mathfrak{m}_{\ell}): M \ra M$.
For this, it is enough to construct an inverse mapping of the projection $Z^1(\pi_L , \partial \pi_L ; M) \ra Z^1(\pi_L ; M) $.
Let $\tilde{f}:\pi_L \ra M \rtimes G$ be any homomorphism over $f$ as being in $Z^1(\pi_L ; M) $.
Choose some $b_{\ell}$ and $ c_{\ell} \in M$ with
$$\widetilde{f}(\mathfrak{m}_{\ell })= ( b_{\ell}, \ f(\mathfrak{m}_{\ell })) , \ \ \ \ \ \ \ \ \widetilde{f}(\mathfrak{l}_{\ell } )= ( c_{\ell}, \ f(\mathfrak{l}_{\ell })) \in M \rtimes G .$$
Since the pair $ ( \mathfrak{m }_{\ell },\mathfrak{l}_{\ell }) $ commutes in $\pi_L$,
we have $ \widetilde{f}(\mathfrak{m}_{\ell }) \widetilde{f}(\mathfrak{l}_{\ell } )=\widetilde{f}(\mathfrak{l}_{\ell }) \widetilde{f}(\mathfrak{m}_{\ell } )$, which reduces to
$$ \bigl( \ c_{\ell}- c_{\ell}f(\mathfrak{m}_{\ell } ) - b_{\ell}+ b_{\ell}f(\mathfrak{l}_{\ell } \bigr), \ \ f(\mathfrak{l}_{\ell })^{-1} f(\mathfrak{m}_{\ell } )^{-1}f(\mathfrak{l}_{\ell }) f(\mathfrak{m}_{\ell } ) \ \bigr)= (0, 1_g) \in M \rtimes G. $$
Setting $a_{\ell}= b_{\ell} (\mathrm{id} - f(\mathfrak{m}_{\ell }) )^{-1} $
by assumption,
the reduced equality implies $ c_{\ell}= a_{\ell} (\mathrm{id} - f(\mathfrak{l}_{\ell }) )$.
Hence, the correspondence $ \widetilde{f} \mapsto (\widetilde{f}, a_1, \dots, a_{\# L})$
gives rise to the desired inverse mapping.

Incidentally, the vanishing $\mathrm{Im}(\delta^*)$ is obtained from $H ^1( \partial \pi_L ; M)=0$.
\end{proof} 

\subsection{Proofs of Theorem \ref{mainthm2} \ and Proposition \ref{aa1133c}}\label{yy4324}
We turn into proving Theorem \ref{mainthm2} and Proposition \ref{aa1133c}.
The proof can be outlined as concrete computations of
the bilinear form $\mathcal{Q}_{\psi, \ell}$ and of the cup product
in turn.
The point here is to describe explicitly the 2-cycle $\mu_{\ell }$ in Lemmas \ref{2gs22} and \ref{2g2s22}.

To accomplish the outline,
one will compute $\mathcal{Q}_{\psi, \ell}$. 
Recall the arc $\beta_{\ell,j}$ explained in Figure \ref{ezu}.
For two $X$-colorings $ \mathcal{C}$ and $ \mathcal{C}'$,
we further employ notation $\mathcal{C}(\beta_{\ell,j})=(y_j, z_j)\in M \times G $ and $\mathcal{C}'(\beta_{\ell,j})=(y_j', z_j) \in M' \times G$.
Then, one can easily verifies that the value $\mathcal{Q}_{\psi,\ell } (\mathcal{C}, \mathcal{C}' )$ is, from the definition, formulated as
\begin{equation}\label{deqqg}
\sum_{k=1}^{N_{\ell}-1 } \psi \bigl( y_1 z_2^{\epsilon_{2}} \cdots z_k^{\epsilon_{k}} - y_{k+1} + \sum_{j=2}^{k} y_{j} (1-z_{j}^{\epsilon_{j}} ) z_{j+1 }^{\epsilon_{j+1}} z_{j+2 }^{\epsilon_{j+2}} \cdots z_{ k }^{\epsilon_{k}} ,\ y_{k + 1 }' \cdot (1 -z_{k +1}^{-\epsilon_{k+1}}) \bigr) ,
\end{equation}
where the second sigma with $k=1$ means zero (cf. \eqref{bbbdd} as the case that all $\epsilon_j=1$ and $\ell=1$).

On the other hand, let us compute the cup products (Lemmas \ref{2gs} and \ref{2g2s}).
To this end, we now introduce a 2-cycle.
Consider the abelian subgroup $ \langle \mathfrak{m}_{\ell, j} \rangle \cong \Z $ generated by
the meridian $\mathfrak{m}_{\ell, j}$ with respect to the arc $\beta_{\ell, j}$.
Then, we write $ \mathfrak{M}_{\ell}$ for the disjoint union
$ \langle \mathfrak{m}_{\ell, 1} \rangle \sqcup \langle \mathfrak{m}_{\ell, 2} \rangle \sqcup \cdots \sqcup \langle \mathfrak{m}_{\ell, N_{\ell}} \rangle $,
and do $ \mathfrak{M}_{\rm arc }$ for the whole union
$ \sqcup_{\gamma } \langle \mathfrak{m}_{\gamma} \rangle$
running over every arcs $\gamma $.
Let us define an element $ \hat{\mu}_\ell^{\rm pre} $ in the relative complex $ C_2(\pi_L , \partial \pi_L \sqcup \mathfrak{M}_{\ell};
\Z)$ with trivial coefficients to be
$$ ((1 , 1), \mathfrak{l}_{\ell}) +\sum_{k=1}^{N_{\ell}-1} ( (\mathfrak{m}_{\ell, 1}^{\epsilon_1}
\cdots \mathfrak{m}_{\ell, k}^{\epsilon_k},
\mathfrak{m}_{\ell, k+1}^{\epsilon_{k+1}}) , 1) - \sum_{k=1}^{N_{\ell}} ( (1,1) , \mathfrak{m}_{\ell, k}^{\epsilon_{k}}) .$$
Here, the last term
has only the non-trivial $ (k +\# L+1)$-th component $\mathfrak{m}_{\ell, k}^{\epsilon_k }$.
Then we can easily see that $\hat{\mu}_\ell^{\rm pre }$ is a 2-cycle.
Moreover, 
it is easy to verify the following lemma:
\begin{lem}\label{2gs}
Take the inclusion pair $\iota_Y$ in Remark \ref{clAl221}, and
the relative composite map 
$$r_Y:\bigl( K(\pi_1 (Y),1 ) , \ K(\partial \pi_1 (Y),1 ) \bigr) \lra \bigl( K(\pi_1 (Y),1 ) , \ K(\partial \pi_1 (Y) \sqcup \mathfrak{M}_{\ell},1 ) \bigr)$$
induced from the inclusions-pair $ (\pi_1 (Y) , \ \partial \pi_1 (Y) )\ra (\pi_1 (Y) , \ \partial \pi_1 (Y) \sqcup \mathfrak{M}_{\ell})$.
Consider the $\ell$-th 2-cycle $ \mu_{\ell} \in H_2( Y_L ,\partial Y_L;\Z)\cong \Z^{\# L} $ as before.
Then $(r_Y \circ \iota_Y )_*(\mu_{\ell})= \hat{\mu}^{\rm pre }_{\ell}. $
\end{lem}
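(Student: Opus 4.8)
Lemma \ref{2gs} asserts that a specific explicit 2-chain $\hat{\mu}_\ell^{\rm pre}$ in the relative complex equals the image of the fundamental $\ell$-th Seifert class $\mu_\ell$ under the two inclusion-pair maps $\iota_Y$ and $r_Y$. Let me sketch how I would prove this.

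=== PROOF PROPOSAL (LaTeX) ===

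The plan is to verify two things in sequence: first, that $\hat{\mu}_\ell^{\rm pre}$ is indeed a relative 2-cycle (so the statement is well-posed), and second, that it represents the same homology class as $(r_Y \circ \iota_Y)_*(\mu_\ell)$. For the first part, I would apply the relative differential $\partial_2^{\rm rel}$ to $\hat{\mu}_\ell^{\rm pre}$ directly from its definition in \S\ref{KI11}. The group-homology component $((1,1),\mathfrak{l}_\ell) + \sum_k ((\cdots),\mathfrak{m}_{\ell,k+1}^{\epsilon_{k+1}})$ telescopes under $\partial_2$: each term $\partial_2(g,h) = h - gh + g$ contributes boundary pieces that cancel in pairs, leaving precisely the longitude word $\mathfrak{l}_\ell = \mathfrak{m}_{\ell,1}^{\epsilon_1}\cdots \mathfrak{m}_{\ell,N_\ell}^{\epsilon_{N_\ell}}$ expressed as a sum of $1$-simplices. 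The cross terms $\iota_j(b_j)$ coming from the $\mathfrak{M}_\ell$-summands (the $-\sum_k ((1,1),\mathfrak{m}_{\ell,k}^{\epsilon_k})$ pieces) are designed to absorb exactly these leftover boundary contributions, and the $\partial \pi_L$-summand absorbs the longitude $\mathfrak{l}_\ell$. This is a bookkeeping computation: I expect the telescoping to be routine but requiring care with the signs $\epsilon_k$ and the index shifts.

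For the second part, the key observation is that both classes live in $H_2(\pi_L, \partial\pi_L \sqcup \mathfrak{M}_\ell; \Z)$, and by Remark \ref{clAl221} this relative group homology is identified (via the Eilenberg--MacLane correspondence) with the relative singular homology of the mapping cone. I would argue geometrically that the Seifert surface generator $\mu_\ell \in H_2(Y_L, \partial Y_L; \Z)$, when pushed into the cone where the meridian circles $\mathfrak{M}_\ell$ are also collapsed, admits an explicit cellular representative matching $\hat{\mu}_\ell^{\rm pre}$. Concretely, the annular neighborhood of the longitude $\mathcal{P}_\ell$ in the Seifert surface decomposes into bigons (one per crossing along $\mathcal{P}_\ell$), and each bigon corresponds to one of the summands $((\mathfrak{m}_{\ell,1}^{\epsilon_1}\cdots\mathfrak{m}_{\ell,k}^{\epsilon_k}, \mathfrak{m}_{\ell,k+1}^{\epsilon_{k+1}}),1)$; the boundary longitude gives $((1,1),\mathfrak{l}_\ell)$, and the relative meridian corrections give the final sum.

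The main obstacle will be justifying the second part rigorously rather than merely plausibly: namely, identifying the geometric Seifert surface with this particular algebraic 2-cycle requires tracking how the cellular structure of $\iota_Y$ (attaching higher cells to kill $\pi_{\geq 2}$) interacts with the fundamental class, and confirming that the \emph{relative} identification through $r_Y$ does not introduce an extra boundary term. I would handle this by reducing to the knot case $\# L = 1$ first, where $\mu_\ell$ is a genuine Seifert surface and the computation of its image under $\iota_Y$ is cleaner, and then observe that for links each component contributes independently since the longitudes $\mathfrak{l}_\ell$ and meridian families $\mathfrak{M}_\ell$ are indexed separately. A cleaner alternative, which I would prefer if the direct geometric argument proves delicate, is to verify the identity by pairing both sides against cohomology: since I have the explicit 1-cocycle description from Lemma \ref{clAl1} and the cup-product formula from \S\ref{KI11}, I can check that $\langle u \smile v, \hat{\mu}_\ell^{\rm pre}\rangle$ reproduces the formula \eqref{deqqg} for $\mathcal{Q}_{\psi,\ell}$, and invoke the nondegeneracy of the Kronecker pairing to conclude equality of the two homology classes; this reduces Lemma \ref{2gs} to a purely algebraic comparison that the subsequent lemmas will need anyway.
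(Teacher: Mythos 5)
The paper offers no proof of Lemma \ref{2gs} at all --- it is asserted with ``it is easy to verify the following lemma'' --- so your proposal can only be judged on its own merits. Your first step (checking that $\hat{\mu}_\ell^{\rm pre}$ is a relative 2-cycle by telescoping $\partial_2$ of the terms $((\mathfrak{m}_{\ell,1}^{\epsilon_1}\cdots\mathfrak{m}_{\ell,k}^{\epsilon_k},\mathfrak{m}_{\ell,k+1}^{\epsilon_{k+1}}),1)$, with the longitude and meridian summands absorbing the leftover 1-chains) is correct and routine, modulo the paper's loose sign conventions. Your direct geometric identification --- cutting the band following the $\ell$-th component along the meridian disks that are coned off in $\mathfrak{M}_\ell$, one triangle per undercrossing, with the Seifert-surface boundary supplying $((1,1),\mathfrak{l}_\ell)$ --- is the right idea and is presumably what the author had in mind. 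Two caveats, though: the meridians $\mathfrak{m}_{\ell,j}$ are meridians of the \emph{over}-arcs $\beta_{\ell,j}$, which for $\# L>1$ may lie on other components (the paper itself says ``$\beta_i$ has the same link-component to $\gamma_{\ell_i}$ for some $\ell_i$'' in the proof of Theorem \ref{mainthm2}), so your claim that ``each component contributes independently'' does not hold as stated; and since $H_2(\pi_L)\cong H_2(Y_L)\cong \Z^{\# L-1}\neq 0$ for links, any attempt to pin down the class by its boundary in the long exact sequence of the pair must also rule out a discrepancy by boundary-torus classes. A chain-level computation of the image of an explicit Seifert surface avoids this, but that is exactly the part you leave as a sketch.

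The genuine gap is your preferred fallback. Verifying that $\langle \mathcal{K}_{\mathcal{C},\mathcal{C}'},\hat{\mu}_\ell^{\rm pre}\rangle$ reproduces \eqref{deqqg} is precisely Lemma \ref{2g2s}, and it involves $\mu_\ell$ nowhere; to conclude anything about $(r_Y\circ\iota_Y)_*(\mu_\ell)$ from such a comparison you would need independent knowledge of $\langle u\smile v,\mu_\ell\rangle$, which is exactly the quantity Theorem \ref{mainthm2} is designed to compute and which Lemma \ref{2gs} exists to make accessible --- so this route is circular. Moreover, the invoked ``nondegeneracy of the Kronecker pairing'' is false in this setting: the pairing couples $H^2(\pi_L,\partial\pi_L\sqcup\mathfrak{M}_\ell;M\otimes M')$ with $H_2(\pi_L,\partial\pi_L\sqcup\mathfrak{M}_\ell;\Z)$ with values only in the coinvariants $(M\otimes M')_{\pi_L}$, and even over a field, pairing only against cup products of relative 1-classes cannot separate points of $H_2$. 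So if the geometric argument ``proves delicate,'' the fallback does not rescue it; the lemma genuinely requires exhibiting the image of the Seifert class at the chain level (or an equivalent boundary computation plus control of the $H_2(\pi_L)$-ambiguity), and your proposal as written stops short of that.
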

\begin{rem}\label{2gs22}
In some case with $\# L >1$, the homology class $\hat{\mu}^{\rm pre }_{\ell}$ vanishes. For example, if $L$ is the Hopf link,
$Y_L$ is homotopic to the one of the boundary tori $S^1 \times S^1$.
Hence, we can easily verify that, for any local system $M$, the second homology $ H_2(\pi_1 (Y),\ \partial \pi_1 (Y) ;M)$ vanishes.

In comparison with Proposition \ref{aa11c},
we claim that every bilinear form $\mathcal{Q}_{\psi}$ of the Hopf link $L$ is trivial.
Actually, for the diagram $D$ with two arcs $\alpha_1$ and $ \alpha_2$, the formula \eqref{aac} becomes $ (x_1-x_2) (1-z_1)= (x_1-x_2)(1 -z_2 ) =0$,
and the formulation \eqref{bbbdd} on $\mathcal{Q}_{\psi}$ is reduced to $ \psi(x_1-x_2,\ x_2 (1-z_\ell^{-1}))= 0$ by the $G$-invariance.
\end{rem}

Next, for $\ell \leq \# L,$ we will set up a homomorphism between the sets of 1-cocycles
$$ \zeta_{\ell}: Z^1(\pi_L , \partial \pi_L ;M) \lra Z^1(\pi_L , \partial \pi_L \sqcup \mathfrak{M}_\ell;M) $$
as follows. Recall the terminology $b_j \in M $ in the proof in \S \ref{yy43}.
By Lemma \ref{clAl1}, every element of $Z^1(\pi_L , \partial \pi_L ;M) $
can be represented by a homomorphism $ \widetilde{f}$ with $(a_1, \dots, a_{\#L}) \in M^{\#L}$
satisfying $\widetilde{f}( \mathfrak{m}_{\ell,1})= \kappa (a_\ell , f(\mathfrak{m}_{\ell,1})) .$
Hence, the correspondence
$$ ( \widetilde{f}, a_1, \dots, a_{\# L}) \longmapsto (\widetilde{f}, a_1, \dots, a_{\# L}, b_1, \dots, b_{N_{\ell}} ) $$
yields the desired homomorphism $\zeta_{\ell} $.
In addition, by iterating the process, we can similarly obtain a homomorphism
$\zeta: Z^1(\pi_L , \partial \pi_L ;M) \ra Z^1(\pi_L , \partial \pi_L \sqcup \mathfrak{M}_{\rm arc} ;M)$.

We will use these $\zeta$ and $\zeta_{\ell}$ to recover the bilinear form $\mathcal{Q}_{\psi}$ from some cup product:
\begin{lem}\label{2g2s}
For any two colorings $\mathcal{C}$ and $\mathcal{C}' $, consider the cup product of the form
$$ \mathcal{K}_{\mathcal{C}, \mathcal{C}'}:=
\bigl( \zeta_{\ell} \circ \Omega(\mathcal{C}) \bigr) \smile \bigl( \zeta'_{\ell} \circ \Omega'(\mathcal{C}')\bigr) \in Z^2 ( \pi_L , \partial \pi_L \sqcup \mathfrak{M}_{\ell};M \otimes M'). $$
Then, the pairing $ \psi ( \langle \mathcal{K}_{\mathcal{C}, \mathcal{C}'},\ \hat{\mu}^{\rm pre }_{\ell} \rangle) $ is equal to
the value $\mathcal{Q}_{\psi}(\mathcal{C}, \mathcal{C}') . $
\end{lem}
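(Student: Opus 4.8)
The plan is to evaluate the Kronecker pairing $\langle \mathcal{K}_{\mathcal{C},\mathcal{C}'}, \hat{\mu}^{\rm pre}_{\ell}\rangle$ directly at the cochain level, using the explicit formulas for the relative cup product and the evaluation $\langle\ ,\ \rangle$ recorded in \S\ref{KI11}, and then to compare the result, after applying $\psi$, with the closed expression \eqref{deqqg} for $\mathcal{Q}_{\psi,\ell}(\mathcal{C},\mathcal{C}')$. First I would record the crossed homomorphisms underlying $\Omega(\mathcal{C})$ and $\Omega'(\mathcal{C}')$: writing $\mathcal{C}(\beta_{\ell,j})=(y_j,z_j)$ and $h:=\tilde f_{\mathcal{C}}$, the definition \eqref{g21gg} of $\kappa$ gives $h(\mathfrak{m}_{\ell,j}^{\epsilon_j})=y_j(z_j^{\epsilon_j}-1)$, and $h$ is extended to products of meridians by the crossed-homomorphism rule $h(gg')=h(g)\cdot g'+h(g')$; the element $b_j$ adjoined by $\zeta_\ell$ is exactly this value on $\mathfrak{m}_{\ell,j}$, and the same data with primes describes $h'=\tilde f_{\mathcal{C}'}$.

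Next I would split the pairing along the three families of terms of $\hat{\mu}^{\rm pre}_{\ell}$, since the evaluation $\langle(f,k_1,\dots),(a,b_1,\dots)\rangle=f(a)+\sum_i k_i(b_i)$ pairs each component of $\mathcal{K}_{\mathcal{C},\mathcal{C}'}$ against the corresponding component of the cycle. The absolute $2$-chains $((\mathfrak{m}_{\ell,1}^{\epsilon_1}\cdots\mathfrak{m}_{\ell,k}^{\epsilon_k},\mathfrak{m}_{\ell,k+1}^{\epsilon_{k+1}}),1)$ pair with $h\smile h'$; expanding $(h\smile h')(g_1,g_2)=-(h(g_1)\cdot g_2)\otimes h'(g_2)$ with $g_1=\mathfrak{m}_{\ell,1}^{\epsilon_1}\cdots\mathfrak{m}_{\ell,k}^{\epsilon_k}$ and $g_2=\mathfrak{m}_{\ell,k+1}^{\epsilon_{k+1}}$ and substituting the telescoping expansion of $h(g_1)$ recovers the first slot of $\psi$ in \eqref{deqqg} together with the second-slot value $h'(g_2)=y'_{k+1}(z_{k+1}^{\epsilon_{k+1}}-1)$. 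The meridian-subgroup terms $-((1,1),\mathfrak{m}_{\ell,k}^{\epsilon_k})$ pair with the relative component $b_k\smile h'$, a degree $(0,1)$ cup product, and supply the remaining correction terms, while the longitude term $((1,1),\mathfrak{l}_{\ell})$ pairs with $a_\ell\smile h'$ evaluated on $\mathfrak{l}_\ell$.

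Finally I would assemble the three contributions. The $G$-invariance of $\psi$ is used to move the left action by $z_{k+1}^{\epsilon_{k+1}}$ off the first slot and to rewrite the second slot $y'_{k+1}(z_{k+1}^{\epsilon_{k+1}}-1)$ as $y'_{k+1}(1-z_{k+1}^{-\epsilon_{k+1}})$, matching \eqref{deqqg} exactly. The ``wrap-around'' contributions coming from the $k=1$ and $k=N_\ell$ meridian terms and from the longitude are reconciled using the longitude identity \eqref{deq221} (equivalently Lemma \ref{2gs}, which identifies $\hat{\mu}^{\rm pre}_\ell$ with the image of $\mu_\ell$). I expect the main obstacle to be purely the sign and index bookkeeping: one must track simultaneously the exponents $\epsilon_j$, the sign $(-1)^{pq}$ in the cup product, and the overall minus sign on the meridian family in $\hat{\mu}^{\rm pre}_\ell$, while reconciling the three different index ranges of the three sums. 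Checking that the degenerate entries $(1,1)$ contribute nothing (by the normalization $h(1)=0$) and that the telescoping leaves no residual term then collapses the total pairing to precisely \eqref{deqqg}, completing the proof.
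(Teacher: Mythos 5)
Your proposal is correct and follows essentially the same route as the paper's proof: you split $\hat{\mu}^{\rm pre}_{\ell}$ into the longitude term, the absolute telescoping terms, and the meridian-subgroup terms (the paper's $\hat{\mu}_{(1)},\hat{\mu}_{(2)},\hat{\mu}_{(3)}$), pair each against the matching component $(\widetilde{f}\smile\widetilde{f}',\ a_j\otimes\widetilde{f}',\ b_k\otimes\widetilde{f}')$ of $\mathcal{K}_{\mathcal{C},\mathcal{C}'}$ using the explicit non-homogeneous cup-product formula, and reconcile the wrap-around via the crossed-homomorphism expansion of $\widetilde{f}'(\mathfrak{l}_{\ell})$ and the identity \eqref{deq221}, exactly as the paper does before comparing with \eqref{deqqg}. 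The sign and index bookkeeping you flag is indeed the only remaining content, and it collapses as you predict.
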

\begin{proof}
By Lemma \ref{clAl1}, the composite $ \zeta_{\ell}^{(')} \circ \Omega^{(')}(\mathcal{C}^{(')})$ forms
$ (\widetilde{f}^{(')}, a_1^{(')}, \dots, a_{\# L}^{(')}, b_1^{(')}, \dots, b_{N_{\ell}}^{(')} ) $.
Then
the cup product $ \mathcal{K}_{\mathcal{C}, \mathcal{C}'}$ is, by definition,
formulated as
$$ ( \widetilde{f} \smile \widetilde{f}' , \ a_1 \otimes \widetilde{f}', \dots, a_{\# L}\otimes \widetilde{f}', \ b_1\otimes \widetilde{f}', \dots, \ b_{N_{\ell}} \otimes \widetilde{f}' ) . $$
Write $ \hat{\mu}_{(1)}, \ \hat{\mu}_{(2)}$ and $ \ \hat{\mu}_{(3)} $ for the first, second and third term in $\hat{\mu}^{\rm pre }_{\ell} $, respectively.
We will compute the pairings $ \langle \mathcal{K}_{\mathcal{C}, \mathcal{C}'},\ \hat{\mu}_{(i)} \rangle$.
Note from the definitions that the third term $\langle \mathcal{K}_{\mathcal{C}, \mathcal{C}'},\ \hat{\mu}_{(3)} \rangle $ is
$- \sum_{k=1}^{N_{\ell} } b_{k}\otimes b_{k}'(1-z_{k}^{-\epsilon_{k}} ) $.
Next, the first one
$\langle \mathcal{K}_{\mathcal{C}, \mathcal{C}'},\ \hat{\mu}_{(1)} \rangle = b_1 \otimes \tilde{ f} (\mathfrak{l}_{\ell })$ is written in
$$\bigl( b_1 z_1^{\epsilon_1} \cdots z_{N_{\ell}}^{\epsilon_{N_{\ell}}}\bigr)\otimes \bigl(
\sum_{k=1}^{N_{\ell}} b_{k}' (1-z_{k}^{\epsilon_{k}} ) z_{k+1 }^{\epsilon_{k+1}} z_{k+2 }^{\epsilon_{k+2}} \cdots z_{N_{\ell}}^{\epsilon_{N_{\ell}}} \bigr)
= - \sum_{k=1}^{N_{\ell}} \bigl( b_1 z_1^{\epsilon_1} \cdots z_{k -1 }^{\epsilon_{k-1 }}\bigr)\otimes \bigl(
b_{k }' (1-z_{k}^{ -\epsilon_{k}} ) \bigr).$$
Finally, we now compute the second term as
\[\psi \langle \mathcal{K}_{\mathcal{C}, \mathcal{C}'},\ \hat{\mu}_{(2)} \rangle =
\sum_{k=1}^{N_{\ell}-1 } \psi \bigl( \sum_{j=1 }^k b_j(1-z_{j}^{\epsilon_j}) z_{j+1}^{\epsilon_{j+1}}
\cdots z_{k+1}^{\epsilon_{k+1}} ,\ b_{k + 1 }' \cdot (1 -z_{k +1}^{\epsilon_{k+1}}) \bigr)
\]
\[ = \sum_{k=1}^{N_{\ell}-1 } \psi \bigl(
-b_1 z_1^{\epsilon_1} \cdots z_{N_{\ell}}^{\epsilon_{N_{\ell}}}+
b_1 z_2^{\epsilon_2} \cdots z_{N_{\ell}}^{\epsilon_{N_{\ell}}} +
\sum_{j=2}^k b_j(1-z_{j}^{\epsilon_j}) z_{j+1} ^{\epsilon_{j+1}}
\cdots z_{k}^{\epsilon_{k}} ,\ b_{k + 1 }' \cdot (1 -z_{k +1}^{- \epsilon_{k+1}}) \bigr) .\]
Here, notice that this first term equals $\psi( b_1 \otimes \tilde{ f} (\mathfrak{l}_{\ell }) +
( b_1 , b_{1}' -b'_{1} z_{1}^{-\epsilon_{1}}))$.
To summarize, comparing the sum $ \langle \mathcal{K}_{\mathcal{C}, \mathcal{C}'},\ \hat{\mu}_{(1)}+ \hat{\mu}_{(2)}+ \hat{\mu}_{(3)} \rangle $
with the formula \eqref{deqqg} of $\mathcal{Q}_{\psi}(\mathcal{C}, \mathcal{C}') $
immediately accounts for the desired equality.
\end{proof}
As the next step, let us reduce the 2-cycle $\hat{\mu}_{\ell}^{\rm pre}$ to a 2-cycle in $ C_2(\pi_L , \partial \pi_L ;\Z) $:
\begin{lem}\label{2g2s22}
Take two arcs $\alpha$ and $ \beta$ from the same link component, and
put the associated meridians $ \mathfrak{m}_{\alpha}$ and $ \mathfrak{m}_{ \gamma} \in \pi_L$.
Then there exists $ \nu^{\rm pre}_{\alpha, \gamma } \in C_2(\pi_L;\Z ) $ such that the 2-chain $ \nu_{\alpha, \gamma} \in C_2(\pi_L , \partial \pi_L \sqcup \mathfrak{M}_{\rm arc};\Z ) $ of the form
\begin{equation}\label{aa6}
(\nu_{\alpha, \gamma }^{\rm pre}, 1) -( (1,1), \mathfrak{m}_{\alpha} )- ( (1,1), \mathfrak{m}_{ \gamma} )
\end{equation}
is a 2-cycle
and that the following pairing is zero:
\begin{equation}\label{010101} \langle \bigl( \zeta \circ \Omega (\mathcal{C}) \bigr) \smile \bigl( \zeta' \circ \Omega' (\mathcal{C}')\bigr) , \ \nu_{\alpha, \gamma }
\rangle \in (M \otimes M')_{\pi_L }. \end{equation}
\end{lem}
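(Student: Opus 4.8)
The plan is to reduce everything to the conjugacy of the two meridians and then to reproduce, at the chain level, the bookkeeping already carried out in Lemma \ref{2g2s}.

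First I would construct $\nu^{\rm pre}_{\alpha,\gamma}$ and check the cycle property. Since $\alpha$ and $\gamma$ lie on the same link component, the meridians $\mathfrak{m}_{\alpha}$ and $\mathfrak{m}_{\gamma}$ are conjugate in $\pi_L$: reading the under-crossings along a path of $D$ joining $\alpha$ to $\gamma$ yields a word $w$ (a signed product of meridians) with $\mathfrak{m}_{\gamma}=w^{-1}\mathfrak{m}_{\alpha}w$. I would take $\nu^{\rm pre}_{\alpha,\gamma}=(\mathfrak{m}_{\alpha},w)-(w,\mathfrak{m}_{\gamma})\in C_2(\pi_L;\Z)$ (or the telescoping staircase $\sum_k(P_k,\mathfrak{m}_{\bullet}^{\epsilon})$ along this path, exactly as in $\hat\mu^{\rm pre}_\ell$, when $w$ is a genuine product). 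Applying the bar differential $\partial_2(g_1,g_2)=(g_2)-(g_1g_2)+(g_1)$ and using $w\mathfrak{m}_{\gamma}=\mathfrak{m}_{\alpha}w$, everything telescopes to $\partial_2\nu^{\rm pre}_{\alpha,\gamma}=(\mathfrak{m}_{\alpha})-(\mathfrak{m}_{\gamma})$. Feeding this into $\partial_2^{\rm rel}$ and using that on trivial $\Z$-coefficients $\partial_1$ annihilates every $1$-simplex sitting in a subgroup slot, the two terms $((1,1),\mathfrak{m}_{\alpha})$ and $((1,1),\mathfrak{m}_{\gamma})$ contribute (through the $\iota_j$) exactly the $1$-chains that cancel $\partial_2\nu^{\rm pre}_{\alpha,\gamma}$ in the $\Gamma$-component; here the signs in \eqref{aa6} are arranged, together with the mapping-cone convention under which $\hat\mu^{\rm pre}_\ell$ of Lemma \ref{2gs} is a cycle, so that $(\mathfrak{m}_{\alpha})$ and $(\mathfrak{m}_{\gamma})$ enter with opposite sign. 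Hence $\nu_{\alpha,\gamma}$ is a $2$-cycle.

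Next I would prove the vanishing of \eqref{010101}. As the cup product of two $1$-cocycles is a genuine $2$-cocycle and $\nu_{\alpha,\gamma}$ a genuine $2$-cycle, I may evaluate the Kronecker pairing at the chain level, splitting it precisely as in the proof of Lemma \ref{2g2s} into a $\Gamma$-part $\langle\,\widetilde{f}\smile\widetilde{f}',\ \nu^{\rm pre}_{\alpha,\gamma}\rangle$ and two subgroup-slot parts $\langle\,k_{\bullet}\smile\widetilde{f}',\ \mp\mathfrak{m}_{\bullet}\rangle$, where $k_{\alpha},k_{\gamma}$ are the slot components of $\zeta\circ\Omega(\mathcal{C})$ furnished by Lemma \ref{clAl1}. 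Expanding the cup product by its defining formula and rewriting $\widetilde{f}(\mathfrak{m}_{\gamma})$ via the crossed-homomorphism identity forced by $\mathfrak{m}_{\gamma}=w^{-1}\mathfrak{m}_{\alpha}w$, the $\Gamma$-part produces exactly the terms matching the two slot contributions, once one passes to the coinvariants $(M\otimes M')_{\pi_L}$ and uses $G$-invariance to move the $f(w)$- and $f(\mathfrak{m}_{\bullet})$-actions across the tensor factors. Conceptually the vanishing is forced because $\langle\mathfrak{m}_{\alpha}\rangle$ and $\langle\mathfrak{m}_{\gamma}\rangle$ are infinite cyclic, so $\widetilde{f}\smile\widetilde{f}'$ restricts to a coboundary on them ($H^2(\Z;-)=0$), and the relative slot cochains $k_{\bullet}\smile\widetilde{f}'$ are precisely the primitives witnessing this; the contributions along the two capped meridians therefore annihilate the conjugation chain.

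The main obstacle is this final cancellation in $(M\otimes M')_{\pi_L}$. It is only a bar-cochain manipulation, but one must thread the crossed-homomorphism identity for $\widetilde{f}$ together with the coinvariant relations through several terms with consistent signs and semidirect-product actions. Crucially, one cannot shortcut it by declaring $\nu_{\alpha,\gamma}$ null-homologous: the connecting homomorphism sends $[\nu_{\alpha,\gamma}]$ to $[(\mathfrak{m}_{\alpha})]-[(\mathfrak{m}_{\gamma})]$ in the $H_1$ of the capped cyclic subgroups, which is nonzero, so the pairing vanishes only because the paired class is a cup product of one-dimensional classes. Verifying that the semidirect-product bookkeeping collapses cleanly — rather than leaving a residual coinvariant term — is where I expect the real work to lie.
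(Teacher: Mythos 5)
Your proposal is correct and takes essentially the same route as the paper: the paper simply reduces WLOG to the case where $\alpha$ and $\gamma$ are adjacent across a single crossing (so your conjugating word $w$ is just $\mathfrak{m}_{\beta}^{\epsilon}$) and writes the four-term chain $(\mathfrak{m}_{\alpha}, \mathfrak{m}_{\beta}^{\epsilon}) + (\mathfrak{m}_{\beta}^{-\epsilon}, \mathfrak{m}_{\alpha} \mathfrak{m}_{\beta}^{\epsilon})-(\mathfrak{m}_{\beta}^{\epsilon},\mathfrak{m}_{\beta}^{-\epsilon} )-(1,1)$, whose bar boundary is exactly the $(\mathfrak{m}_{\alpha})-(\mathfrak{m}_{\gamma})$ your two-term telescope $(\mathfrak{m}_{\alpha},w)-(w,\mathfrak{m}_{\gamma})$ produces for general $w$. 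The paper likewise leaves the vanishing of \eqref{010101} as an unspelled ``after a computation,'' so your chain-level cancellation scheme (crossed-homomorphism identity plus coinvariance) matches its proof, and your explicit attention to the sign placement of the two slot terms is if anything more careful than the paper's own bookkeeping there.
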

\begin{proof}
Without loss of generality, we may assume that 
$\alpha$ is next to $ \beta $ and
an arc $\beta $ separates $\alpha$ from $ \gamma$ (see Figure \ref{koutenpn}).
Let us denote by $\epsilon$ the sign of the crossing, and
define $\nu^{\rm pre}_{\alpha, \gamma }$ to be 
$$ (\mathfrak{m}_{\alpha}, \mathfrak{m}_{\beta}^{\epsilon}) + (\mathfrak{m}_{\beta}^{-\epsilon} ,
\mathfrak{m}_{\alpha} \mathfrak{m}_{\beta}^{\epsilon})-(\mathfrak{m}_{\beta}^{\epsilon},\mathfrak{m}_{\beta}^{-\epsilon} )-(1,1) \in C_2(\pi_L;\Z ). $$
Since $ \partial_2(\nu^{\rm pre}_{\alpha, \gamma})= (\mathfrak{m}_{\alpha})+( \mathfrak{m}_{\beta}^{-\epsilon}\mathfrak{m}_{\alpha} \mathfrak{m}_{\beta}^{\epsilon})=
(\mathfrak{m}_{\alpha})+(\mathfrak{m}_{ \gamma}) $,
the 2-chain $ \nu^{\rm pre}_{\alpha, \gamma }$ in \eqref{aa6} is a 2-cycle.
In addition, after a computation, we can
verify that the pairing \eqref{010101} is zero.
\end{proof}
Using the above preparation, we now prove Theorem \ref{mainthm2} and Proposition \ref{aa1133c}.
\begin{proof}[Proof of Theorem \ref{mainthm2}]
To start, we will formulate explicitly the 2-class $\hat{\mu}_\ell $ in $C_2(\pi_L, \partial \pi_L;\Z ). $
Since the arc $\beta_i $ has the same link-component to
$\gamma_{\ell_i}$ for some $\ell_i$,
we take the 2-cycle $ \nu_{\ell_i, \beta_i } $ obtained in Lemma \ref{2g2s22}.
Put a 2-cycle $d_i$ of the form
$$ \bigl( ( \mathfrak{m}_{\ell_i}, \mathfrak{m}_{\ell_i}^{-1}) , 1) -
( (1,1), \mathfrak{m}_{\ell_i}) - ((1,1), \mathfrak{m}_{\ell_i} ), $$
where the second (resp. third) term has only a non-trivial element in the $( i+1) $-th (resp. $\beta_i $-th) component.
Using the 2-cycle $\hat{\mu}_\ell^{\rm pre}$,
let us set $ \hat{\mu}_\ell := \hat{\mu}_\ell^{\rm pre} + \sum_{i=2}^{N_{\ell}} (\nu_{\ell_i, \beta_i } -d_i) $.
By construction, $ \hat{\mu}_\ell$ is also a 2-cycle and is presented by some elements of $(\pi_L,\partial \pi_L) $;
consequently, it lies in $ C_2(\pi_L,\partial \pi_L).$
Furthermore, from the definitions of $ \zeta_{\ell }$ and $\zeta$, we have the equality
$$ \langle \bigl( \zeta \circ \Omega(\mathcal{C}) \bigr) \smile \bigl( \zeta' \circ \Omega' (\mathcal{C}')\bigr) , \ \hat{\mu}_\ell\rangle =
\langle \bigl( \zeta_{\ell } \circ \Omega (\mathcal{C}) \bigr)\smile \bigl( \zeta_{\ell }' \circ \Omega' (\mathcal{C}') \bigr), \ \hat{\mu}^{\rm pre}_\ell \rangle
\in (M \otimes M')_{\pi_L }. $$
Notice from Lemma \ref{2g2s} that the pairing with $\psi$ is equal to $\mathcal{Q}_{\psi}(\mathcal{C}, \mathcal{C}') $.
Hence, the proof is completed. 
\end{proof}

\begin{proof}[Proof of Proposition \ref{aa1133c}]
Since $W$ is $S^2$ with removed $m$ open discs,
$W$ and $\partial W$ are Eilenberg-MacLane spaces,
and we have the isomorphisms $\pi_L = \pi_1( S^3 \setminus T_{m,m}) \cong \Z \times \pi_1(W ) \cong \Z \times F_{m-1}. $
Here the summand $\Z$ is generated by $a_1 \cdots a_m \in \pi_1( S^3 \setminus T_{m,m})$.
Hence, it follows from the assumption $z_1 \cdots z_m = \id_M$ and Lemma \ref{clAl1}
that the projection $ \mathcal{P}: \pi_1( S^3 \setminus T_{m,m}) \ra \pi_1(W ) $
induces an isomorphism $ \mathcal{P}^*: H^1( W,\partial W ; M ) \ra H^1(\pi_L ,\partial \pi_L ; M )$. 
Hence, the required claims immediately follow from Theorem \ref{mainthm2},
which completes the proof.
\end{proof}
\subsection*{Acknowledgments}
The author sincerely expresses his gratitude to
Akio Kawauchi for
many useful discussions on the classical Blanchfield pairing.
He also thanks 
Takahiro Kitayama and Masahico Saito 
for valuable comments.
The work is partially supported by JSPS KAKENHI Grant Number 00646903.

\vskip 1pc

\normalsize

Faculty of Mathematics, Kyushu University,
744, Motooka, Nishi-ku, Fukuoka, 819-0395, Japan

\

E-mail address: {\tt nosaka@math.kyushu-u.ac.jp}

\end{document}